\theoremstyle{plain}
\newtheorem{thm}{Theorem}[section]
\newtheorem{lem}[thm]{Lemma}
\theoremstyle{definition}
\newtheorem{ex}[thm]{Example}
\newtheorem{rem}[thm]{Remark}
\numberwithin{equation}{section}
\DeclareMathOperator{\Id}{Id}
\begin{document}
\title[Elliptic systems with nonlocal BCs]{Nonzero radial solutions for a class of elliptic systems with nonlocal BCs on annular domains}\thanks{Partially supported by G.N.A.M.P.A. - INdAM (Italy)}
\author{Gennaro Infante}%
\address{Gennaro Infante, Dipartimento di Matematica e Informatica, Universit\`{a} della
Calabria, 87036 Arcavacata di Rende, Cosenza, Italy}
\email{gennaro.infante@unical.it}
\author{Paolamaria Pietramala}%
\address{Paolamaria Pietramala, Dipartimento di Matematica e Informatica, Universit\`{a}
della Calabria, 87036 Arcavacata di Rende, Cosenza, Italy}
\email{pietramala@unical.it}
\subjclass[2010]{Primary 45G15, secondary 34B10, 35B07, 35J57, 47H30}
\keywords{Elliptic system, annular domain, radial solution, multiplicity, non-existence, spectral radius, cone, nontrivial solution, nonlocal boundary conditions, fixed point index.}
\begin{abstract} 
We provide new results on the existence, non-existence, localization and multiplicity of nontrivial solutions for systems of Hammerstein integral equations. 
Some of the criteria involve a comparison with the spectral radii of some associated linear operators. We apply our results to prove the existence of multiple nonzero radial solutions for some systems of elliptic boundary value 
problems subject to nonlocal boundary conditions. Our approach is topological and relies on the classical fixed point index. We present an example to illustrate our theory. 
\end{abstract}
\maketitle
\section{Introduction}
In the interesting paper~\cite{dolo}, 
Do \'O, Lorca and Ubilla, motivated by the work of Lee~\cite{lee} and by their previous paper~\cite{dolo0}, 
considered the existence of three positive solutions for the semilinear elliptic system 
\begin{gather}
\begin{aligned}\label{ellsyst}
\Delta u + \tilde{f}_1(|x|,u,v)=0,\ |x|\in [R_1,R_0], \\
\Delta v +\tilde{f}_2(|x|, u,v)=0,\, |x|\in [R_1,R_0],%
\end{aligned}
\end{gather}
subject to the \emph{non-homogenous} boundary conditions (BCs)
\begin{gather}
\begin{aligned}\label{nh-BCs}
u|_{\partial B_{R_1}}=0\ \text{and}\ u|_{\partial B_{R_0}}=A_1, \\
v|_{\partial B_{R_1}}=0\ \text{and}\ v|_{\partial B_{R_0}}=A_2, 
\end{aligned}
\end{gather}
where $x\in \mathbb{R}^n $, $0<R_1<R_0<\infty$, $A_1, A_2>0$ and $B_\rho=\{ x\in \mathbb{R}^n : |x|<\rho\}$. 
The methodology used in~\cite{dolo} is to seek radial solutions of the system 
\eqref{ellsyst}-\eqref{nh-BCs}, 
by means of an auxiliary system of Hammerstein integral equations
\begin{gather}
\begin{aligned}\label{nh-Ham}
u(t)=\int_{0}^{1}k(t,s)\hat{f}_1(s,u(s),v(s),A_1, A_2)\,ds, \\
v(t)= \int_{0}^{1}k(t,s)\hat{f}_2(s,u(s),v(s),A_1, A_2)\,ds,%
\end{aligned}
\end{gather}
where 
\begin{equation*}
k(t,s)=
 \begin{cases} s(1-t), & s\le t, \\
   t(1-s),& s>t.
\end{cases}
\end{equation*}
The integral equations in~\eqref{nh-Ham} share the same \emph{non-negative} kernel and the non-homogeneous terms that occur in  \eqref{nh-BCs} are 
incorporated in the nonlinearities $\hat{f}_1,\hat{f}_2$ (a similar idea has been fruitfully employed in ~\cite{dolo3} also in the context of exterior domains). The existence of positive solutions of \eqref{nh-Ham} is obtained via the well-known Krasnosel'ski\u\i{}-Guo Theorem on cone compressions and cone expansions (see~\cite{guolak}). The Krasnosel'ski\u\i{}-Guo Theorem and, more in general, \emph{topological} methods have been used to study the existence of positive solutions for elliptic equations subject to \emph{homogeneous} BCs on annular domains, see for example the papers by Dunninger and Wang~\cite{dunn-wang1, dunn-wang2}, Lan and Lin~\cite{lan-lin-na},  Lan and Webb~\cite{lanwebb}, Ma~\cite{rma-el}, Wang~\cite{wangh} and references therein.

The study of \emph{nonlocal} BCs, in the framework of ODEs, has been initiated by 1908 by Picone~\cite{Picone}, who considered multi-point BCs. This topic has been developed by a large number of authors. The motivation for this type of study is driven also by the fact that nonlocal problems occur when modelling several phenomena in engineering, physics and life sciences. For an introduction to nonlocal problems we refer to the reviews by Whyburn~\cite{Whyburn}, Conti~\cite{Conti}, Ma~\cite{rma}, Ntouyas~\cite{sotiris} and \v{S}tikonas~\cite{Stik}. 

Nonlocal BCs have been studied also in the the context of elliptic problems, we mention here the papers by Amster and Maurette~\cite{ammau}, Beals~\cite{beals}, Bitsadze and Samarski\u\i {}~\cite{bisamarADAN69}, Browder~\cite{browder}, Schechter~\cite{schechter}, Skubachevski{\u\i}~\cite{skuba1, skuba2}, Wang~\cite{wangy}, Ye and Ke~\cite{yeKe}. In~\cite{webb} Webb considered the existence of \emph{positive} radial solutions for the boundary value problem (BVP)
\begin{gather}
\begin{aligned}\label{jwnel}
\triangle u+ h(|x|)f(u) = 0, \ |x|\in [R_{1},R_{0}], \\
u|_{\partial B_{R_0}}=0\ \text{and}\ (u(R_1\cdot)-\alpha u(R_\eta \cdot))|_{\partial B_{1}}=0,%
\end{aligned}
\end{gather}
where $\alpha>0$ and $R_\eta \in (R_1,R_0)$.

Here we develop a theory for the existence of \emph{nonzero} solutions of systems of Hammerstein integral equations of the type 
\begin{gather}
\begin{aligned}\label{syst-intro}
u(t)=\int_{0}^{1}k_1(t,s)g_1(s)f_1(s,u(s),v(s))\,ds, \\
v(t)= \int_{0}^{1}k_2(t,s)g_2(s)f_2(s,u(s),v(s))\,ds,%
\end{aligned}
\end{gather}
that is well-suited to prove the existence of \emph{nontrivial} radial solutions for a \emph{class} of elliptic systems subject to nonlocal BCs, similar to the ones that occur in \eqref{jwnel}. With this approach the kernels, allowed to \emph{change sign}, take into account the 
nonlocalities in the BCs.

The existence of  \emph{positive} solutions of systems of integral equations of the type \eqref{syst-intro} has been widely studied, see for example \cite{ag-or-pw, chzh1, chzh2, dunn-wang1, dunn-wang2, Goodrich1, Goodrich2,
 jh-rl1, jh-rl2,lan, lan-lin-lms, lan-lin-na, karejde, ya2, yang-zhang} and references therein. Nonzero solutions of systems of Hammerstein integral equations were considered in \cite{df-gi-do}; here we improve the results of  \cite{df-gi-do} in several directions: we allow different growths in the nonlinearities, discuss non-existence results and provide some criteria that involve the spectral radii of some suitable associated linear operators.

We illustrate our theory in the \emph{special case} of a system of nonlinear elliptic BVPs with nonlocal BCs, that generates two \emph{different} kernels in the associated system of integral equations, namely
\begin{gather*}
\begin{aligned}\label{ellbvp}
\Delta u + h_1(|x|) f_1(u,v)=0,\ &|x|\in [R_1,R_0], \\
\Delta v + h_2(|x|) f_2(u,v)=0,\ &|x|\in [R_1,R_0],\\
\frac{\partial u}{\partial r}\bigr\rvert_{\partial B_{R_0}}=0\ \text{and}\
(u(R_1\cdot)-\alpha_1 & u(R_\eta \cdot))|_{\partial B_{1}}=0,\\
\frac{\partial v}{\partial r}\bigr\rvert_{\partial B_{R_0}}=0 \ \text{and}\ 
\bigl(v(R_1\cdot)-\alpha_2 & \frac{\partial v}{\partial r}(R_\xi \cdot)\bigr)|_{\partial B_{1}}=0,%
\end{aligned}
\end{gather*}
where $x\in \mathbb{R}^n $, $\alpha_1, \alpha_2 \in \mathbb{R}$,
$0<R_1<R_0<\infty$, $R_\eta, R_\xi \in (R_1,R_0)$ and $\dfrac{\partial }{\partial r}$ denotes differentiation in the radial direction $r=|x|$.

Here we focus the attention on the existence of solutions that are allowed to \emph{change sign}, in the spirit of the earlier works~\cite{gijwjmaa, gijwjiea}. The approach that we use is topological, relies on classical fixed point index theory and we make use of ideas from the papers~\cite{df-gi-do, gipp-nonlin, gi-pp-ft, gijwjiea, lan-lin-na, lanwebb, webb, jw-tmna, jwkleig}. In the last Section we present an example that illustrates the applicability of our results.
\section{The system of  integral equations}\label{sec2} 
We begin by stating some assumptions on the terms that occur in the system of  Hammerstein integral equations
\begin{gather}
\begin{aligned}\label{syst}
u(t)=\int_{0}^{1}k_1(t,s)g_1(s)f_1(s,u(s),v(s))\,ds, \\
v(t)= \int_{0}^{1}k_2(t,s)g_2(s)f_2(s,u(s),v(s))\,ds,%
\end{aligned}
\end{gather}
namely:
\begin{itemize}
\item For every $i=1,2$, $f_i: [0,1]\times (-\infty,\infty)\times (-\infty,\infty) \to
[0,\infty)$ satisfies Carath\'{e}odory conditions, that is, $f_i(\cdot,u,v)$
is measurable for each fixed $(u,v)$ and $f_i(t,\cdot,\cdot)$ is continuous
for almost every (a.e.) $t\in [0,1]$, and for each $r>0$ there exists $\phi_{i,r} \in
L^{\infty}[0,1]$ such that{}
\begin{equation*}
f_i(t,u,v)\le \phi_{i,r}(t) \;\text{ for } \; u,v\in [-r,r]\;\text{ and
a.\,e.} \; t\in [0,1].
\end{equation*}%
{}
\item For every $i=1,2$, $k_i:[0,1]\times [0,1]\to (-\infty,\infty)$ is
measurable, and for every $\tau\in [0,1]$ we have
\begin{equation*}
\lim_{t \to \tau} |k_i(t,s)-k_i(\tau,s)|=0 \;\text{ for a.\,e.}\, s \in [0,1].
\end{equation*}%
{}
\item For every $i=1,2$, there exist a subinterval $[a_i,b_i] \subseteq
[0,1] $, a function $\Phi_i \in L^{\infty}[0,1]$, and a constant $c_{i} \in
(0,1]$, such that
\begin{align*}
|k_i(t,s)|\leq \Phi_i(s) \text{ for } &t \in [0,1] \text{ and a.\,e.}\, s\in [0,1], \\
k_i(t,s) \geq c_{i}\Phi_i(s) \text{ for } &t\in [a_i,b_i] \text{ and a.\,e.} \, s \in [0,1].
\end{align*}%
{}
\item For every $i=1,2$, $g_i\,\Phi_i \in L^1[0,1]$, $g_i \geq 0$ a.e., and $%
\int_{a_i}^{b_i} \Phi_i(s)g_i(s)\,ds >0$.
{}
\end{itemize}
We work in the space $C[0,1]\times C[0,1]$ endowed with the norm
\begin{equation*}
\| (u,v)\| :=\max \{\| u\| _{\infty },\| v\| _{\infty }\},
\end{equation*}%
where $\| w\| _{\infty}:=\max \{| w(t)|,t\in [0,1] \}$. 

We recall that a \emph{cone} $K$ in a Banach space $X$  is a closed
convex set such that $\lambda \, x\in K$ for $x \in K$ and
$\lambda\geq 0$ and $K\cap (-K)=\{0\}$. Take
\begin{equation*}
\tilde{K_{i}}:=\{w\in C[0,1]:\min_{t\in \lbrack a_{i},b_{i}]}w(t)\geq c_{i}\|
w\| _{\infty }\},
\end{equation*}%
 and consider the cone $K$
in $C[0,1]\times C[0,1]$ defined by
\begin{equation*}
\begin{array}{c}
K:=\{(u,v)\in \tilde{K_{1}}\times \tilde{K_{2}}\}.%
\end{array}%
\end{equation*}
For a \emph{nontrivial} solution of the system \eqref{syst} we mean a solution
$(u,v)\in K$ of \eqref{syst} such that $\|(u,v)\|\neq 0$.
Note that the functions in $\tilde{K_{i}}$ are positive on the
sub-interval $[a_{i},b_{i}]$ but are allowed to change sign in $[0,1]$. This type of cone has been introduced by  Infante and Webb in \cite{gijwjiea}
and is similar to
a cone of \emph{non-negative} functions
 first used by Krasnosel'ski\u\i{}, see e.g. \cite{krzab}, and D.~Guo, see e.g. \cite{guolak}. 

Under our assumptions, we show that the integral operator
\begin{gather}
\begin{aligned}    \label{opT}
T(u,v)(t):=& 
\left(
\begin{array}{c}
 \int_{0}^{1}k_1(t,s)g_1(s)f_1(s,u(s),v(s))\,ds\\
\int_{0}^{1}k_2(t,s)g_2(s)f_2(s,u(s),v(s))\,ds%
\end{array}
\right)
:=
\left(
\begin{array}{c}
T_1(u,v)(t) \\
T_2(u,v)(t)%
\end{array}
\right) ,
\end{aligned}
\end{gather}
leaves the cone $K$ invariant and is compact.

\begin{lem}\label{compact} 
The operator \eqref{opT} maps $K$ into $K$ and is compact.
\end{lem}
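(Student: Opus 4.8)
The plan is to verify three things in turn: that $T$ is well defined on bounded subsets of $K$, that $T(K)\subseteq K$, and that $T$ is compact. All of this is a fairly standard consequence of the four assumptions together with the Arzel\`a--Ascoli theorem, so I will only sketch the structure.

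I would first fix $(u,v)\in K$, set $r:=\|(u,v)\|$, and observe that for $i=1,2$ the Carath\'eodory growth bound gives
\[
|k_i(t,s)g_i(s)f_i(s,u(s),v(s))|\le \Phi_i(s)\,g_i(s)\,\phi_{i,r}(s)\quad\text{for a.e. }s,
\]
with the right-hand side in $L^1[0,1]$ by the last assumption; hence each integral defining $T_i(u,v)(t)$ converges absolutely. To get $T_i(u,v)\in C[0,1]$ I would take $t_n\to\tau$ and apply dominated convergence: the integrand converges a.e. to $k_i(\tau,s)g_i(s)f_i(s,u(s),v(s))$ by the continuity hypothesis on $k_i$ in its first argument, and it is dominated by the fixed $L^1$ function above, so $T_i(u,v)(t_n)\to T_i(u,v)(\tau)$.

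Next I would check that $T_i(u,v)\in\tilde K_i$. Since $f_i\ge 0$ and $g_i\ge 0$ a.e., the bound $|k_i(t,s)|\le\Phi_i(s)$ gives $|T_i(u,v)(t)|\le \int_0^1 \Phi_i(s)g_i(s)f_i(s,u(s),v(s))\,ds$ for every $t$, so $\|T_i(u,v)\|_\infty$ is at most the right-hand side; while for $t\in[a_i,b_i]$ the lower bound $k_i(t,s)\ge c_i\Phi_i(s)$ and $g_if_i\ge 0$ give $T_i(u,v)(t)\ge c_i\int_0^1 \Phi_i(s)g_i(s)f_i(s,u(s),v(s))\,ds\ge c_i\|T_i(u,v)\|_\infty$. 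This is exactly the cone condition defining $\tilde K_i$, so $T(u,v)\in\tilde K_1\times\tilde K_2=K$.

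Finally, for compactness I would take a bounded set $\mathcal B\subseteq K$, say $\|(u,v)\|\le r$ on $\mathcal B$. Uniform boundedness of $T(\mathcal B)$ follows from the estimate above, since $\int_0^1\Phi_i(s)g_i(s)\phi_{i,r}(s)\,ds$ bounds $\|T_i(u,v)\|_\infty$ independently of $(u,v)$. For equicontinuity I would use, for $t,\tau\in[0,1]$ and any $(u,v)\in\mathcal B$,
\[
|T_i(u,v)(t)-T_i(u,v)(\tau)|\le \int_0^1 |k_i(t,s)-k_i(\tau,s)|\,g_i(s)\,\phi_{i,r}(s)\,ds,
\]
whose right-hand side is independent of $(u,v)$ and tends to $0$ as $t\to\tau$ by the continuity hypothesis on $k_i$ and dominated convergence (the integrand being dominated by $2\Phi_i(s)g_i(s)\phi_{i,r}(s)\in L^1$). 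Hence $T_i(\mathcal B)$ is bounded and equicontinuous, so Arzel\`a--Ascoli yields relative compactness of $T_i(\mathcal B)$ in $C[0,1]$, and doing this for $i=1,2$ shows $T(\mathcal B)$ is relatively compact in $C[0,1]\times C[0,1]$. I expect the only delicate point to be that the dominating functions in the equicontinuity step must be chosen uniformly over $\mathcal B$ — which is exactly what the $L^\infty$ formulation of the growth condition provides — the rest being routine verifications.
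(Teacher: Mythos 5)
Your proposal is correct and follows essentially the same route as the paper: the cone-invariance step (upper bound via $\Phi_i$, lower bound via $c_i\Phi_i$ on $[a_i,b_i]$, using $g_if_i\ge 0$) is exactly the paper's argument, and the compactness part, which the paper dismisses as ``routine arguments,'' is precisely the dominated-convergence/Arzel\`a--Ascoli verification you spell out. No gaps.
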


\begin{proof}
Take $(u,v)\in K$ such that $\| (u,v)\| \leq r$. Then we have, for $%
t\in \lbrack 0,1]$,
\begin{equation*}
| T_{1}(u,v)(t)| \leq \int_{0}^{1}\Phi
_{1}(s)g_{1}(s)f_{1}(s,u(s),v(s))\,ds
\end{equation*}%
and therefore
\begin{equation*}
\| T_{1}(u,v)\|_\infty \leq \int_{0}^{1}\Phi
_{1}(s)g_{1}(s)f_{1}(s,u(s),v(s))\,ds.
\end{equation*}%
Then we obtain
\begin{eqnarray*}
\min_{t\in \lbrack a_{1},b_{1}]}T_{1}(u,v)(t) &\geq & c_{1}\int_{0}^{1}\Phi _{1}(s)g_{1}(s)f_{1}(s,u(s),v(s))\,ds \\
&\geq & c_{1}\| T_{1}(u,v)\|_\infty  .
\end{eqnarray*}%
Hence we have $T_{1}(u,v)\in \tilde{K_{1}}$. In a similar manner we proceed for $%
T_{2}(u,v)$.\newline
Moreover, the map $T$ is compact since, by routine arguments, the components $T_{i}$ are compact maps.
\end{proof}

The next Lemma summarizes some classical results regarding the fixed point index, for more details see~\cite{Amann-rev, guolak}.
If $\Omega$ is a open bounded subset of a cone $K$ (in the relative
topology) we denote by $\overline{\Omega}$ and $\partial \Omega$
the closure and the boundary relative to $K$. When $\Omega$ is an open
bounded subset of $X$ we write $\Omega_K=\Omega \cap K$, an open subset of
$K$.
\begin{lem} 
Let $\Omega$ be an open bounded set with $0\in \Omega_{K}$ and $\overline{\Omega}_{K}\ne K$. Assume that $F:\overline{\Omega}_{K}\to K$ is
a compact map such that $x\neq Fx$ for all $x\in \partial \Omega_{K}$. Then the fixed point index $i_{K}(F, \Omega_{K})$ has the following properties.
\begin{itemize}
\item[(1)] If there exists $e\in K\setminus \{0\}$ such that $x\neq Fx+\lambda e$ for all $x\in \partial \Omega_K$ and all $\lambda
>0$, then $i_{K}(F, \Omega_{K})=0$.
\item[(2)] If  $\mu x \neq Fx$ for all $x\in \partial \Omega_K$ and for every $\mu \geq 1$, then $i_{K}(F, \Omega_{K})=1$.
\item[(3)] If $i_K(F,\Omega_K)\ne0$, then $F$ has a fixed point in $\Omega_K$.
\item[(4)] Let $\Omega^{1}$ be open in $X$ with $\overline{\Omega^{1}}\subset \Omega_K$. If $i_{K}(F, \Omega_{K})=1$ and
$i_{K}(F, \Omega_{K}^{1})=0$, then $F$ has a fixed point in $\Omega_{K}\setminus \overline{\Omega_{K}^{1}}$. The same result holds if
$i_{K}(F, \Omega_{K})=0$ and $i_{K}(F, \Omega_{K}^{1})=1$.
\end{itemize}
\end{lem}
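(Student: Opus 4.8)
The plan is to deduce each of the four items from the standard axiomatic properties of the fixed point index of a compact map relative to a cone (equivalently, a retract of a Banach space): \emph{normalization} (a constant map with value in $\Omega_K$ has index $1$), \emph{additivity}, \emph{homotopy invariance} along compact homotopies that are admissible on $\partial\Omega_K$, the \emph{existence property} (a nonzero index forces a fixed point in $\Omega_K$), and \emph{excision/permanence}; see \cite{Amann-rev, guolak}. I will also use the immediate consequence that if $F$ has no fixed point in the whole of $\overline{\Omega}_K$, then $i_K(F,\Omega_K)=0$ (contrapositive of the existence property, since $\Omega_K\subseteq\overline{\Omega}_K$). Throughout, the fact that $K$ is a convex cone guarantees that the auxiliary homotopies below take values in $K$, and compactness of $F$ guarantees that they are compact.

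Item (3) is nothing but the existence property. For item (2) I would consider the homotopy $H\colon[0,1]\times\overline{\Omega}_K\to K$, $H(t,x)=tF(x)$, which is compact and $K$-valued because $F(x)\in K$ and $K$ is a cone. It is admissible on $\partial\Omega_K$: if $x=tF(x)$ for some $x\in\partial\Omega_K$ then $t\neq0$, since $0\in\Omega_K$ lies in the interior of $\Omega_K$ relative to $K$ and so $0\notin\partial\Omega_K$; hence $\mu x=F(x)$ with $\mu=1/t\ge1$, contradicting the hypothesis. Homotopy invariance then gives $i_K(F,\Omega_K)=i_K(H(1,\cdot),\Omega_K)=i_K(H(0,\cdot),\Omega_K)=i_K(0,\Omega_K)$, and the last index equals $1$ by normalization applied to the constant map $x\mapsto0$ with $0\in\Omega_K$.

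For item (1) I would fix $e\in K\setminus\{0\}$ as in the hypothesis and consider $H\colon[0,\lambda_0]\times\overline{\Omega}_K\to K$, $H(\lambda,x)=F(x)+\lambda e$, which is again compact and $K$-valued. Since $\overline{\Omega}_K$ is bounded and $F(\overline{\Omega}_K)$ is bounded (compactness of $F$), any fixed point $x=F(x)+\lambda e$ with $x\in\overline{\Omega}_K$ satisfies $\lambda\|e\|_\infty=\|x-F(x)\|\le M$ for a constant $M$ independent of $\lambda$; choosing $\lambda_0>M/\|e\|_\infty$ ensures that $H(\lambda_0,\cdot)$ has no fixed point in $\overline{\Omega}_K$ at all, whence $i_K(H(\lambda_0,\cdot),\Omega_K)=0$. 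The homotopy is admissible on $\partial\Omega_K$ for every $\lambda\in[0,\lambda_0]$: for $\lambda=0$ this is the standing assumption $x\neq F(x)$, and for $\lambda>0$ it is the hypothesis $x\neq F(x)+\lambda e$. Homotopy invariance then yields $i_K(F,\Omega_K)=i_K(H(0,\cdot),\Omega_K)=i_K(H(\lambda_0,\cdot),\Omega_K)=0$.

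Finally, for item (4) I would argue by contradiction. Suppose $F$ has no fixed point in $\Omega_K\setminus\overline{\Omega^1_K}$. Since $i_K(F,\Omega^1_K)$ is assumed to be defined, $F$ has no fixed point on $\partial\Omega^1_K$, and by hypothesis none on $\partial\Omega_K$; hence $F$ has no fixed point in $\overline{\Omega}_K$ outside the two disjoint open subsets $\Omega^1_K$ and $\Omega_K\setminus\overline{\Omega^1_K}$ of $\Omega_K$. Additivity then gives $i_K(F,\Omega_K)=i_K(F,\Omega^1_K)+i_K(F,\Omega_K\setminus\overline{\Omega^1_K})=i_K(F,\Omega^1_K)$, the last term vanishing by the contrapositive of the existence property, which contradicts $i_K(F,\Omega_K)=1$ and $i_K(F,\Omega^1_K)=0$; the case $i_K(F,\Omega_K)=0$, $i_K(F,\Omega^1_K)=1$ is symmetric. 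Hence $F$ has a fixed point in $\Omega_K\setminus\overline{\Omega^1_K}$. The only point that needs care, at each invocation of homotopy invariance or additivity, is checking that the relevant map has no fixed point on the relevant boundary; once the homotopies above are written down this is routine, and I expect no genuine obstacle, the statement being a standard repackaging of the index axioms.
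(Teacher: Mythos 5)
The paper states this lemma without proof, simply citing \cite{Amann-rev, guolak} for these classical properties of the fixed point index, so there is no in-paper argument to compare against. Your derivation from the standard index axioms (normalization, homotopy invariance, additivity, existence) is exactly the classical one found in those references and is correct: the homotopy $tF$ for item (2) with the observation that $0\notin\partial\Omega_K$, the homotopy $F+\lambda e$ pushed to a fixed-point-free map for item (1), and additivity plus the contrapositive of the existence property for item (4) are all sound. Two cosmetic remarks: in item (1) the relevant norm is that of the ambient Banach space $X$ rather than specifically $\|\cdot\|_\infty$ (the lemma is stated for a general cone $K$ in $X$), and in item (4) it is worth noting explicitly that $\Omega_K\setminus\overline{\Omega^1_K}$ is open in $K$ and disjoint from $\Omega^1_K$, so that additivity applies; both points are routine and do not affect the validity of the argument.
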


We use the following (relative) open bounded sets in $K$:
\begin{equation*}
K_{\rho_1,\rho_2} = \{ (u,v) \in K : \|u\|_{\infty}< \rho_1\ \text{and}\ \|v\|_{\infty}< \rho_2\},
\end{equation*}
and
\begin{equation*}
V_{\rho_1,\rho_2} =\{(u,v) \in K: \min_{t\in [a_1,b_1]}u(t)<\rho_1\ \text{and}\ \min_{t\in
[a_2,b_2]}v(t)<\rho_2\}.
\end{equation*}
If $\rho_1=\rho_2=\rho$ we write simply $K_{\rho}$ and $V_{\rho}$. The set $V_\rho$ (in the context of systems) was introduced by the authors in~\cite{gipp-ns} and is equal to the set
called $\Omega^{\rho /c}$ in~\cite{df-gi-do}. $\Omega^{\rho /c}$ is an extension to the case of systems of a set given by Lan~\cite{lan}. 

For our index calculations we make use of the following Lemma, similar to Lemma $5$ of \cite{df-gi-do}. The novelty here is the use of different radii, in the spirit of the paper~\cite{chzh2}. This choice allows more freedom in the growth of the nonlinearities. The proof of the Lemma is similar to the corresponding one in~\cite{df-gi-do} and is omitted.

\begin{lem}  \label{esca} 
  The sets defined above have the following properties:
\begin{itemize}
\item $K_{\rho_1,\rho_2}\subset V_{\rho_1,\rho_2}\subset K_{\rho_1/c_1,\rho_2/c_2}$.
\item $(w_1,w_2) \in \partial V_{\rho_1,\rho_2}$ \; iff \; $(w_1,w_2)\in K$ and $\displaystyle
\min_{t\in [a_i,b_i]} w_i(t)= \rho_i$ for some $i\in \{1,2\}$ and $\displaystyle
\min_{t\in [a_j,b_j]}w_j(t)
\le \rho_j$ for $j\neq i$.
\item If $(w_1,w_2) \in \partial V_{\rho_1,\rho_2}$, then for some $i\in\{1,2\}$ $\rho_i \le w_i(t) \le \rho_i/c_i$
 for each $t \in [a_i,b_i]$ and for $j\neq i$ we have $0 \leq w_j(t) \leq \rho_j/c_j$ for each $t\in [a_j,b_j]$ and $\|w_j\|_\infty \leq \rho_j/c_j$.
\end{itemize}
\end{lem}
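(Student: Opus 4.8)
The plan is to establish the three bullet points in turn, using throughout only the membership conditions of the cone --- for $w\in\tilde{K_i}$ one has $c_i\|w\|_\infty\le\min_{t\in[a_i,b_i]}w(t)\le\|w\|_\infty$, so in particular $w\ge 0$ on $[a_i,b_i]$ --- together with the functionals $\gamma_i(u,v):=\min_{t\in[a_i,b_i]}w_i(t)$ (writing $w_1:=u$, $w_2:=v$), which are continuous on $K$ and positively homogeneous. The inclusions are then immediate: if $\|w_i\|_\infty<\rho_i$ for $i=1,2$ then $\gamma_i(u,v)\le\|w_i\|_\infty<\rho_i$, so $(u,v)\in V_{\rho_1,\rho_2}$; and if $\gamma_i(u,v)<\rho_i$ then $c_i\|w_i\|_\infty\le\gamma_i(u,v)<\rho_i$, so $\|w_i\|_\infty<\rho_i/c_i$, i.e.\ $(u,v)\in K_{\rho_1/c_1,\rho_2/c_2}$.

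For the boundary characterization, note that $V_{\rho_1,\rho_2}=\{(u,v)\in K:\gamma_1(u,v)<\rho_1,\ \gamma_2(u,v)<\rho_2\}$ is the preimage, within $K$, of the open box $(-\infty,\rho_1)\times(-\infty,\rho_2)$ under the continuous map $(\gamma_1,\gamma_2)$. By continuity every point of $K$ with $\gamma_1<\rho_1$ and $\gamma_2<\rho_2$ is interior to $V_{\rho_1,\rho_2}$, and every element of $\overline{V_{\rho_1,\rho_2}}$ satisfies $\gamma_i\le\rho_i$ for both $i$; hence $\partial V_{\rho_1,\rho_2}$ is contained in the set where $\gamma_i=\rho_i$ for some $i$ and $\gamma_j\le\rho_j$ for $j\neq i$. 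For the reverse inclusion, given $(u,v)\in K$ with $\gamma_i(u,v)=\rho_i$ and $\gamma_j(u,v)\le\rho_j$, the point itself fails $\gamma_i<\rho_i$ and so lies outside $V_{\rho_1,\rho_2}$, while $\bigl((1-\tfrac{1}{n})u,(1-\tfrac{1}{n})v\bigr)$ belongs to $K$ (closedness of $\tilde{K_1},\tilde{K_2}$ under nonnegative scalings), has $\gamma_\ell=(1-\tfrac{1}{n})\gamma_\ell(u,v)<\rho_\ell$ for $\ell=1,2$ by homogeneity (here one uses $\rho_1,\rho_2>0$), and converges to $(u,v)$ as $n\to\infty$; therefore $(u,v)\in\partial V_{\rho_1,\rho_2}$.

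For the last bullet, take $(w_1,w_2)\in\partial V_{\rho_1,\rho_2}$ and choose, via the previous step, an index $i$ with $\gamma_i(w_1,w_2)=\rho_i$. On $[a_i,b_i]$ the definition of the minimum gives $w_i(t)\ge\rho_i$, while $w_i\in\tilde{K_i}$ gives $\rho_i\ge c_i\|w_i\|_\infty$ and hence $w_i(t)\le\|w_i\|_\infty\le\rho_i/c_i$; combining, $\rho_i\le w_i(t)\le\rho_i/c_i$ on $[a_i,b_i]$. For $j\neq i$ we have $\gamma_j(w_1,w_2)\le\rho_j$, so $c_j\|w_j\|_\infty\le\rho_j$, that is $\|w_j\|_\infty\le\rho_j/c_j$, and since $w_j\ge 0$ on $[a_j,b_j]$ this also gives $0\le w_j(t)\le\rho_j/c_j$ there. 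The only step requiring genuine care is the boundary characterization, where one has to exhibit both an exterior point and an interior sequence; this is exactly where the positivity, homogeneity and continuity of the $\gamma_i$, together with $\rho_1,\rho_2>0$, come into play, the rest being routine bookkeeping with the two cone inequalities --- which, this being the two-radius analogue of Lemma~5 of \cite{df-gi-do}, is why the proof can reasonably be left out.
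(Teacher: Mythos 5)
Your proof is correct: the two inclusions and the third bullet follow exactly as you say from the cone inequality $c_i\|w\|_\infty\le\min_{[a_i,b_i]}w\le\|w\|_\infty$, and your boundary characterization correctly supplies both halves (relative openness of $V_{\rho_1,\rho_2}$ in $K$ for one inclusion, the scaling sequence $(1-\tfrac1n)(u,v)$ together with $\rho_1,\rho_2>0$ for the other). The paper omits this proof, deferring to the one-radius version in Lemma~5 of the cited reference of Franco, Infante and O'Regan, and your argument is precisely the expected two-radius adaptation of that proof.
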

\section{Existence results}
We are now able to prove a result concerning the fixed point index on the set $K_{\rho_1,\rho_2}$.

\begin{lem}
\label{ind1b} Assume that

\begin{enumerate}
\item[$(\mathrm{I}_{\rho_1,\rho_2 }^{1})$] \label{EqB} there exist $\rho_1,\rho_2 >0$
such that for every $i=1,2$
\begin{equation}\label{eqmestt}
 f_i^{\rho_1,\rho_2}  < m_i
\end{equation}{}
where
\begin{equation*}
f_{i}^{\rho_1,\rho_2}=\sup \Bigl\{\frac{f_{i}(t,u,v)}{\rho_i }:\;(t,u,v)\in
\lbrack 0,1]\times [ -\rho_1,\rho_1 ]\times [ -\rho_2,\rho_2 ]\Bigr\} \end{equation*}
and
\begin{equation*}
\ \frac{1}{m_{i}}=\sup_{t\in \lbrack 0,1]}\int_{0}^{1}\vert k_{i}(t,s)\vert g_{i}(s)\,ds.
\end{equation*}{}
\end{enumerate}
Then $i_{K}(T,K_{\rho_1,\rho_2})=1$.
\end{lem}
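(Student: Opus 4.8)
The plan is to show the fixed point index equals $1$ by verifying hypothesis (2) of the fixed point index Lemma: namely, that $\mu(u,v)\neq T(u,v)$ for all $(u,v)\in\partial K_{\rho_1,\rho_2}$ and all $\mu\geq 1$. Once this is established, property (2) gives $i_K(T,K_{\rho_1,\rho_2})=1$ directly.

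Arguing by contradiction, I would suppose there exist $(u,v)\in\partial K_{\rho_1,\rho_2}$ and $\mu\geq 1$ with $\mu(u,v)=T(u,v)$, i.e. $\mu u=T_1(u,v)$ and $\mu v=T_2(u,v)$. Since $(u,v)\in\partial K_{\rho_1,\rho_2}$, at least one of the equalities $\|u\|_\infty=\rho_1$ or $\|v\|_\infty=\rho_2$ holds; without loss of generality suppose $\|u\|_\infty=\rho_1$ (the other case is symmetric, using $T_2$, $m_2$, and $f_2^{\rho_1,\rho_2}$). On $\partial K_{\rho_1,\rho_2}$ we also have $\|u\|_\infty\leq\rho_1$ and $\|v\|_\infty\leq\rho_2$, so for every $s\in[0,1]$ the point $(u(s),v(s))$ lies in $[-\rho_1,\rho_1]\times[-\rho_2,\rho_2]$, and hence $f_1(s,u(s),v(s))\leq \rho_1 f_1^{\rho_1,\rho_2}$ by the definition of $f_1^{\rho_1,\rho_2}$.

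The key estimate then runs as follows: for each $t\in[0,1]$,
\begin{equation*}
\mu|u(t)|=|T_1(u,v)(t)|\leq\int_0^1|k_1(t,s)|g_1(s)f_1(s,u(s),v(s))\,ds\leq \rho_1 f_1^{\rho_1,\rho_2}\int_0^1|k_1(t,s)|g_1(s)\,ds\leq \rho_1 f_1^{\rho_1,\rho_2}\,\frac{1}{m_1}.
\end{equation*}
Taking the supremum over $t\in[0,1]$ yields $\mu\rho_1=\mu\|u\|_\infty\leq \rho_1 f_1^{\rho_1,\rho_2}/m_1<\rho_1$, where the strict inequality uses hypothesis $(\mathrm{I}^1_{\rho_1,\rho_2})$, namely $f_1^{\rho_1,\rho_2}<m_1$. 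This forces $\mu<1$, contradicting $\mu\geq 1$.

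There is no real obstacle here; the argument is a standard cone-norm estimate. The only points requiring a modicum of care are: first, ensuring that one correctly extracts from the boundary condition $(u,v)\in\partial K_{\rho_1,\rho_2}$ that the relevant component attains its bound while both components respect their bounds (so that the sup defining $f_i^{\rho_1,\rho_2}$ applies over the right box); and second, observing that the case $\rho_1=0$ or $\rho_2=0$ cannot arise since the hypothesis requires $\rho_1,\rho_2>0$, so the division by $\rho_i$ is legitimate. The symmetry between the two components means the $i=2$ case is handled verbatim with the obvious relabelling, so it suffices to write out the $i=1$ case in detail.
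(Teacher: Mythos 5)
Your proof is correct and follows essentially the same route as the paper's: establish hypothesis (2) of the index lemma by contradiction, assume without loss of generality that $\|u\|_\infty=\rho_1$, and use the bound $f_1(s,u(s),v(s))\le \rho_1 f_1^{\rho_1,\rho_2}$ together with the definition of $1/m_1$ to derive $\mu\rho_1<\rho_1$, contradicting $\mu\ge 1$. The estimates and the handling of the boundary condition match the paper's argument line for line.
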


\begin{proof}
We show that $\lambda (u,v)\neq T(u,v)$ for every $(u,v)\in \partial K_{\rho_1,\rho_2 }$
and for every $\lambda \geq 1$; this ensures that the index is 1 on $K_{\rho_1,\rho_2 }$.
In fact, if this does not happen, there exist $\lambda \geq 1$ and $(u,v)\in
\partial K_{\rho_1,\rho_2 }$ such that $\lambda (u,v)=T(u,v)$. Assume, without loss of
generality, that $\| u\| _{\infty }=\rho_1 $ and $\| v\| _{\infty
}\leq \rho_2 $. Then
\begin{equation*}
\lambda u(t)= \int_{0}^{1}k_1(t,s)g_1(s)f_1(s,u(s),v(s))\,ds.
\end{equation*}%
Taking the absolute value we have
\begin{equation*}
\lambda |u(t)|= \Bigl|\int_{0}^{1}k_1(t,s)g_1(s)f_1(s,u(s),v(s))\,ds\Bigr|,
\end{equation*}%
and then the supremum over $[0,1]$ gives
\begin{align*}
\lambda {\rho_1} \leq & \sup_{t\in \lbrack 0,1]}\int_{0}^{1}|k_1(t,s)|g_1(s)f_1(s,u(s),v(s))\,ds \\
&\leq {\rho_1} f_1^{\rho_1,\rho_2}\sup_{t\in \lbrack 0,1]}\int_{0}^{1}|k_1(t,s)|g_1(s)\,ds ={\rho_1} f_1^{\rho_1,\rho_2} \dfrac{1}{m_1}.
\end{align*}
Using the hypothesis \eqref{eqmestt} we obtain $\lambda \rho_1 <\rho_1 .$ This
contradicts the fact that $\lambda \geq 1$ and proves the result.
\end{proof}
\begin{rem}
Take $\omega\in L^1([0,1]\times [0,1])$ and denote by
$$
\omega^+(t,s)=\max\{\omega(t,s),0\},\,\,\, \omega^-(s)=\max\{-\omega(t,s),0\}. 
$$
Then we have
$$
\Bigl|\int_0^1 \omega(t,s)ds \Bigr|\le\max\Bigl\{\int_0^1 \omega^+(t,s)ds,\int_0^1 \omega^-(t,s)ds \Bigr\}\le \int_0^1 |\omega(t,s)|ds,
$$
since $\omega=\omega^+ - \omega^-$ and  $|\omega|=\omega^++\omega^-$.

Using the inequality above, it is possible to relax the growth assumptions on the nonlinearities $f_i$. This is done by replacing the quantity $\dfrac{1}{m_{i}}$ with
\begin{equation*}\label{eqmestt2}
\sup_{t \in [0,1]  }\Bigl\{ \max\Bigl\{\int_0^1k_i^+(t,s)g_i(s)\,ds,\int_0^1k_i^-(t,s)g_i(s)\,ds\Bigr\} \Bigr\};
\end{equation*}{}
this idea has been used, in the case of one equation, in \cite{gi-pp-ft}. 
\end{rem}

We give a first Lemma that shows that the index is 0 on a set
$V_{\rho_1,\rho_2 }$.  

\begin{lem}\label{idx0b1}
Assume that
\begin{enumerate}
\item[$(\mathrm{I}^{0}_{\rho_1,\rho_2})$]  there exist $\rho_1,\rho_2>0$ such that for every $i=1,2$
\begin{equation}\label{eqMest}
f_{i,(\rho_1, \rho_2)} > M_i,
\end{equation}{}
where
\begin{align*}
 f_{1,({\rho_1,\rho_2 })}=& \inf \Bigl\{ \frac{f_1(t,u,v)}{ \rho_1}:\; (t,u,v)\in [a_1,b_1]\times[\rho_1,\rho_1/c_1]\times[-\rho_2/c_2, \rho_2/c_2]\Bigr\},\\
 f_{2,({\rho_1,\rho_2 })}=& \inf \Bigl\{ \frac{f_2(t,u,v)}{ \rho_2}:\; (t,u,v)\in [a_2,b_2]\times[-\rho_1/c_1,\rho_1/c_1]\times[\rho_2, \rho_2/c_2]\Bigr\},\\
    \frac{1}{M_i}=& \inf_{t\in
[a_i,b_i]}\int_{a_i}^{b_i} k_i(t,s) g_i(s)\,ds.
\end{align*}
\end{enumerate}
Then $i_{K}(T,V_{\rho_1,\rho_2})=0$.
\end{lem}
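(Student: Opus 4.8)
The plan is to use property (1) of the fixed point index Lemma: I will exhibit an element $e\in K\setminus\{0\}$ such that $(u,v)\neq T(u,v)+\lambda e$ for all $(u,v)\in\partial V_{\rho_1,\rho_2}$ and all $\lambda>0$. A natural choice is $e=(e_1,e_2)$ where each $e_i$ is a fixed function in $\tilde K_i$ that is strictly positive on $[a_i,b_i]$; the simplest concrete choice is $e_i\equiv 1$ if $1\in\tilde K_i$, or more safely one can take $e_i$ to be the function $t\mapsto \int_0^1 k_i(t,s)g_i(s)\,ds$ normalized, which by Lemma~\ref{compact}-type reasoning lies in $\tilde K_i$; in practice it is cleanest to just pick any $e_i\in\tilde K_i\setminus\{0\}$. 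First I would argue by contradiction: suppose there exist $(u,v)\in\partial V_{\rho_1,\rho_2}$ and $\lambda>0$ with $(u,v)=T(u,v)+\lambda e$.

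Next I would invoke Lemma~\ref{esca}: since $(u,v)\in\partial V_{\rho_1,\rho_2}$, there is an index $i\in\{1,2\}$ with $\min_{t\in[a_i,b_i]}u_i(t)=\rho_i$ (writing $(u_1,u_2)=(u,v)$), and moreover $\rho_i\le u_i(t)\le\rho_i/c_i$ on $[a_i,b_i]$, while the other component $u_j$ satisfies $0\le u_j(t)\le\rho_j/c_j$ on $[a_j,b_j]$ and $\|u_j\|_\infty\le\rho_j/c_j$. Take WLOG $i=1$. Then for $t\in[a_1,b_1]$, using that $k_1(t,s)\ge c_1\Phi_1(s)\ge 0$ on this range is not directly what I want — rather I use the definition of $M_1$ and restrict the integral to $[a_1,b_1]$: for $t\in[a_1,b_1]$,
\begin{equation*}
u(t)=T_1(u,v)(t)+\lambda e_1(t)\ge \int_{a_1}^{b_1}k_1(t,s)g_1(s)f_1(s,u(s),v(s))\,ds+\lambda e_1(t).
\end{equation*}
On $[a_1,b_1]$ the arguments $(u(s),v(s))$ lie in $[\rho_1,\rho_1/c_1]\times[-\rho_2/c_2,\rho_2/c_2]$, so $f_1(s,u(s),v(s))\ge f_{1,(\rho_1,\rho_2)}\,\rho_1$. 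Dropping the $\lambda e_1$ term (it is non-negative) and taking the infimum over $t\in[a_1,b_1]$ of $\int_{a_1}^{b_1}k_1(t,s)g_1(s)\,ds=1/M_1$ gives
\begin{equation*}
\rho_1=\min_{t\in[a_1,b_1]}u(t)\ge \rho_1 f_{1,(\rho_1,\rho_2)}\,\frac{1}{M_1}>\rho_1,
\end{equation*}
by hypothesis \eqref{eqMest}, a contradiction. The same argument with indices swapped handles $i=2$.

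I should also verify the standing hypothesis of the index Lemma that $(u,v)\neq T(u,v)$ on $\partial V_{\rho_1,\rho_2}$, but this is the special case $\lambda=0$ absorbed into the same computation (or follows a fortiori), so no extra work is needed; one just needs to note $\rho_i>0$ so the strict inequality bites. The main subtlety — and the place I would be most careful — is the bookkeeping in restricting the Hammerstein integral from $[0,1]$ to $[a_i,b_i]$ and checking that the sign conditions on $k_i$ make the discarded part $\int_{[0,1]\setminus[a_i,b_i]}$ harmless; here it is essential that on $[a_i,b_i]$ we have $k_i(t,s)\ge c_i\Phi_i(s)\ge 0$ for $s\in[a_i,b_i]$, so that the partial integral over $[a_i,b_i]$ is a lower bound for... — actually the cleanest route avoids discarding anything outside: one simply notes $T_i(u,v)(t)=\int_0^1 k_i(t,s)g_i(s)f_i(s,u(s),v(s))\,ds\ge \int_{a_i}^{b_i}k_i(t,s)g_i(s)f_i\,ds$ only if the tail is non-negative, which need not hold since $k_i$ changes sign. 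The correct and standard fix (as in~\cite{df-gi-do}) is that $T_i(u,v)\in\tilde K_i$ so $T_i(u,v)(t)\ge c_i\|T_i(u,v)\|_\infty\ge c_i\sup_t\int_0^1 k_i(t,s)g_i(s)f_i\,ds$ — but even simpler, one uses $\min_{t\in[a_i,b_i]}T_i(u,v)(t)\ge c_i\int_0^1\Phi_i(s)g_i(s)f_i(s,u(s),v(s))\,ds\ge c_i\int_{a_i}^{b_i}\Phi_i(s)g_i(s)f_i\,ds$; then relate $c_i\Phi_i$ to $k_i$ on $[a_i,b_i]$. Reconciling this chain with the exact constant $1/M_i=\inf_{t\in[a_i,b_i]}\int_{a_i}^{b_i}k_i(t,s)g_i(s)\,ds$ is the one genuinely fiddly point, and I would follow the argument of~\cite[Lemma 5]{df-gi-do} verbatim there.
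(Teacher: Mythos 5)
Your overall strategy is the same as the paper's: take $e=(e_1,e_2)$ with $e_i\equiv 1$ (this does lie in $\tilde K_i$, since $c_i\le 1$), suppose $(u,v)=T(u,v)+\lambda(e,e)$ for some $(u,v)\in\partial V_{\rho_1,\rho_2}$, use Lemma~\ref{esca} to place the components in the right boxes, restrict the integral to $[a_1,b_1]$, and conclude $\rho_1\ge \rho_1 f_{1,(\rho_1,\rho_2)}\frac{1}{M_1}+\lambda>\rho_1+\lambda$, a contradiction. The only cosmetic difference is that the paper runs the argument for all $\lambda\ge 0$ at once, which simultaneously disposes of the requirement that $T$ be fixed-point free on the boundary; your ``$\lambda>0$ plus absorb $\lambda=0$'' remark amounts to the same thing.

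The one genuine problem is the doubt you raise, and do not resolve, about discarding $\int_{[0,1]\setminus[a_1,b_1]}k_1(t,s)g_1(s)f_1(s,u(s),v(s))\,ds$. Your worry that ``the tail need not be non-negative since $k_1$ changes sign'' comes from misreading the standing hypotheses: the assumption on the kernels is $k_i(t,s)\ge c_i\Phi_i(s)\ge 0$ for $t\in[a_i,b_i]$ and a.e.\ $s\in[0,1]$ --- the lower bound holds for \emph{all} $s$ in $[0,1]$, not only for $s\in[a_i,b_i]$. The kernels may change sign only for $t$ outside $[a_i,b_i]$. Hence for $t\in[a_1,b_1]$ the integrand is non-negative for a.e.\ $s\in[0,1]$ (recall $g_1\ge0$ and $f_1\ge0$), the tail over $[0,1]\setminus[a_1,b_1]$ is $\ge 0$, and the restriction step is immediate --- exactly as in the paper. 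The alternative chain you sketch at the end (via $\min_{t\in[a_1,b_1]}T_1(u,v)(t)\ge c_1\int_0^1\Phi_1 g_1 f_1\,ds\ge c_1\int_{a_1}^{b_1}\Phi_1 g_1 f_1\,ds$) is not the right fix: since $c_1\Phi_1(s)\le k_1(t,s)$ on $[a_1,b_1]$, it produces a lower bound with $c_1\Phi_1$ where $k_1$ should be, which is weaker than $\inf_{t\in[a_1,b_1]}\int_{a_1}^{b_1}k_1(t,s)g_1(s)\,ds=1/M_1$ and so does not yield the contradiction with the constant $M_1$ as defined. Once the kernel hypothesis is read correctly, your direct computation closes the proof with no further work.
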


\begin{proof}
Let $e(t)\equiv 1$ for $t\in [0,1]$. Then $(e,e)\in K$. We prove that
\begin{equation*}
(u,v)\ne T(u,v)+\lambda (e,e)\quad\text{for } (u,v)\in \partial
V_{\rho_1,\rho_2 }\quad\text{and } \lambda \geq 0.
\end{equation*}
In fact, if this does not happen, there exist $(u,v)\in \partial V_{\rho_1,\rho_2 }$ and
$\lambda \geq 0$ such that $(u,v)=T(u,v)+\lambda (e,e)$.
Without loss of generality, we can assume that for all $t\in [a_1,b_1]$ we have
$$
\rho_1\leq u(t)\leq {\rho_1/c_1},\ \min u(t)=\rho_1 \ \text{and}\ -\rho_2/c_2\leq v(t)\leq {\rho_2/c_2}.
$$
Then, for $t\in [a_1,b_1]$, we obtain
$$
u(t)= \int_{0}^{1}k_1(t,s)g_1(s)f_1(s,u(s),v(s))\,ds + \lambda e(t),
$$
and therefore

\begin{align*}
  u(t)\geq & \int_{a_1}^{b_1}k_1(t,s)g_1(s)f_1(s,u(s),v(s))\,ds+{\lambda}.
\end{align*}
Taking the minimum over $[a_{1},b_{1}]$ gives
\begin{align*}
\rho_1= \min_{t\in \lbrack a_{1},b_{1}]}u(t)\geq &{\rho_1 }f_{1,(\rho_1 ,\rho_2)}\frac{1}{M_{1}}+{\lambda}.
\end{align*}
Using the hypothesis \eqref{eqMest} we obtain $\rho_1>\rho_1 +\lambda $, a contradiction.
\end{proof}
In the following Lemma we exploit an idea that was used in \cite{gipp-nonlin} and we provide a result of index 0 on $V_{\rho_1,\rho_2 }$ of a different flavour;  here we  control the growth of just one nonlinearity $f_i$, at the cost of
having to deal with a larger domain. Nonlinearities with different growths were considered, with different approaches, in~\cite{chzh1, precup1, precup2, ya1} .
\begin{lem}\label{idx0b3}
Assume that
\begin{enumerate}
\item[$(\mathrm{I}^{0}_{\rho_1,\rho_2})^{\star}$] there exist $\rho_1,\rho_2>0$ such that for some $i\in\{1,2\}$ we have
\begin{equation}\label{eqMest1}
f^*_{i,(\rho_1, \rho_2)}>M_i,
\end{equation}{}
\end{enumerate}
where
\begin{equation*}
f^*_{1,(\rho_1,{\rho_2})}=\inf \Bigl\{ \frac{f_1(t,u,v)}{ \rho_1}:\; (t,u,v)\in [a_1,b_1]\times[0,\rho_1/c_1]\times[-\rho_2/c_2, \rho_2/c_2]\Bigr\}.
\end{equation*}
\begin{equation*}
f^*_{2,(\rho_1,{\rho_2})}=\inf \Bigl\{ \frac{f_2(t,u,v)}{ \rho_2}:\; (t,u,v)\in [a_2,b_2]\times[-\rho_1/c_1,\rho_1/c_1]\times[0, \rho_2/c_2]\Bigr\}.
\end{equation*}
Then $i_{K}(T,V_{\rho_1,\rho_2})=0$.
\end{lem}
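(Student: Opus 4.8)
The plan is to run the scheme of the proof of Lemma~\ref{idx0b1}, invoking part~(1) of the fixed point index Lemma with $e(t)\equiv 1$: since $(e,e)\in K\setminus\{0\}$, it is enough to show that $(u,v)\ne T(u,v)+\lambda(e,e)$ for every $(u,v)\in\partial V_{\rho_1,\rho_2}$ and every $\lambda\ge 0$ (the case $\lambda=0$ also guaranteeing that the index is well defined). By symmetry I may assume that the index $i$ appearing in $(\mathrm{I}^{0}_{\rho_1,\rho_2})^{\star}$ is $i=1$, so that $f^{*}_{1,(\rho_1,\rho_2)}>M_1$, and then argue by contradiction, supposing that $(u,v)=T(u,v)+\lambda(e,e)$ for some $(u,v)\in\partial V_{\rho_1,\rho_2}$ and some $\lambda\ge 0$.

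The first step is to check that, for $s\in[a_1,b_1]$, the point $(s,u(s),v(s))$ lies in the box $[a_1,b_1]\times[0,\rho_1/c_1]\times[-\rho_2/c_2,\rho_2/c_2]$ over which $f^{*}_{1,(\rho_1,\rho_2)}$ is an infimum. For this I would use Lemma~\ref{esca}: on $\partial V_{\rho_1,\rho_2}$ one has $\|u\|_\infty\le\rho_1/c_1$ and $\|v\|_\infty\le\rho_2/c_2$, and since $u\in\tilde{K}_1$ also $u(t)\ge 0$ for $t\in[a_1,b_1]$; hence $f_1(s,u(s),v(s))\ge\rho_1 f^{*}_{1,(\rho_1,\rho_2)}$ for a.e. $s\in[a_1,b_1]$. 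Working then with the first component of the equation only, and using that $k_1(t,s)\ge c_1\Phi_1(s)\ge 0$ for $t\in[a_1,b_1]$ together with $g_1,f_1\ge 0$, I would restrict the integral to $[a_1,b_1]$, bound $f_1$ below by $\rho_1 f^{*}_{1,(\rho_1,\rho_2)}$, and take the minimum over $t\in[a_1,b_1]$; recognizing $1/M_1=\inf_{t\in[a_1,b_1]}\int_{a_1}^{b_1}k_1(t,s)g_1(s)\,ds$ and invoking $f^{*}_{1,(\rho_1,\rho_2)}>M_1$ then yields $\min_{t\in[a_1,b_1]}u(t)>\rho_1+\lambda\ge\rho_1$. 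On the other hand, the description of $\partial V_{\rho_1,\rho_2}$ in Lemma~\ref{esca} forces $\min_{t\in[a_1,b_1]}u(t)\le\rho_1$, a contradiction; hence part~(1) of the index Lemma gives $i_K(T,V_{\rho_1,\rho_2})=0$.

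The step I expect to be the crux --- and the reason the definition of $f^{*}_{1,(\rho_1,\rho_2)}$ uses the enlarged interval $[0,\rho_1/c_1]$ rather than $[\rho_1,\rho_1/c_1]$ as in Lemma~\ref{idx0b1} --- is precisely that, on $\partial V_{\rho_1,\rho_2}$, the coordinate realizing the boundary value may be the second one, in which case on $[a_1,b_1]$ one only knows $u\ge 0$ and not $u\ge\rho_1$, so $f^{*}_{1,(\rho_1,\rho_2)}$ must bound $f_1$ from below for all $u\in[0,\rho_1/c_1]$. Since no growth hypothesis is placed on $f_2$, the contradiction has in any event to be drawn from the first equation alone, and the final clash is between the lower bound $\min_{[a_1,b_1]}u>\rho_1$ just obtained and the upper bound $\min_{[a_1,b_1]}u\le\rho_1$ built into membership in $\partial V_{\rho_1,\rho_2}$. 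Apart from this observation, the argument is a routine repetition of the computations in the proof of Lemma~\ref{idx0b1}.
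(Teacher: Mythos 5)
Your proposal is correct and follows essentially the same route as the paper: the translate-by-$(e,e)$ index-zero criterion, the localization of $(s,u(s),v(s))$ in the enlarged box $[0,\rho_1/c_1]\times[-\rho_2/c_2,\rho_2/c_2]$ via Lemma~\ref{esca} and the cone condition, restriction of the integral to $[a_1,b_1]$, and the clash between $\min_{[a_1,b_1]}u>\rho_1+\lambda$ and $\min_{[a_1,b_1]}u\le\rho_1$. Your remark on why the $u$-interval must be $[0,\rho_1/c_1]$ rather than $[\rho_1,\rho_1/c_1]$ is exactly the point of this variant.
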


\begin{proof}
Suppose that the condition \eqref{eqMest1} holds for $i=1$.
Let  $(u,v)\in \partial V_{\rho_1,\rho_2 }$ and
$\lambda \geq 0$ such that $(u,v)=T(u,v)+\lambda (e,e)$.
So  for all $t\in [a_1,b_1]$ we have $\min u(t)\leq \rho_1$, $0 \leq u(t)\leq \rho_1/c_1$ and $-\rho_2/c_2 \leq v(t)\leq \rho_2/c_2$ and for $t\in [a_2,b_2]$, $\min v(t)\leq \rho_2$.
For $t\in [a_1,b_1]$, as in the proof of Lemma \ref {idx0b1}, we have
\begin{equation*}
  u(t)\geq \int_{a_1}^{b_1}k_1(t,s)g_1(s)f_1(s,u(s),v(s))\,ds+{\lambda}.
\end{equation*}
Taking the minimum over $[a_{1},b_{1}]$ gives
\begin{equation*}
 \min_{t\in [a_{1},b_{1}]}u(t)\geq \rho_1 f_{1,(\rho_1,{\rho_2})}^{\ast}\frac{1}{M_{1}}+{\lambda}.
\end{equation*}
Using the hypothesis \eqref{eqMest1} we obtain $\rho_1 >\rho_1+\lambda$, a contradiction.
\end{proof}
We now state  a result regarding the existence of at least one, two or three nontrivial solutions. The proof follows by the properties of fixed point index and is omitted.
Note that, by expanding the lists in conditions $(S_{5}),(S_{6})$, it is possible to state results for four or more nontrivial solutions, see for 
example the paper~\cite{kljdeds}.

\begin{thm}\label{mult-sys}
The system \eqref{syst} has at least one nontrivial solution
in $K$ if one of the following conditions holds.

\begin{enumerate}

\item[$(S_{1})$]  For $i=1,2$ there exist $\rho _{i},r _{i}\in (0,\infty )$ with $\rho
_{i}/c_i<r _{i}$ such that $(\mathrm{I}_{\rho _{1},\rho_2}^{0})\;\;[\text{or}\;(%
\mathrm{I}_{\rho _{1},\rho_2}^{0})^{\star }]$, $(\mathrm{I}_{r _{1},r_2}^{1})$ hold.

\item[$(S_{2})$] For $i=1,2$ there exist $\rho _{i},r _{i}\in (0,\infty )$ with $\rho
_{i}<r _{i}$ such that $(\mathrm{I}_{\rho _{1},\rho_2}^{1}),\;\;(\mathrm{I}%
_{r _{1},r_2}^{0})$ hold.
\end{enumerate}

The system \eqref{syst} has at least two nontrivial solutions in $K$ if one of the following conditions holds.

\begin{enumerate}

\item[$(S_{3})$] For $i=1,2$ there exist $\rho _{i},r _{i},s_i\in (0,\infty )$
with $\rho _{i}/c_i<r_i <s _{i}$ such that $(\mathrm{I}_{\rho
_{1},\rho_2}^{0})$, $[\text{or}\;(\mathrm{I}_{\rho _{1},\rho_2}^{0})^{\star }],\;\;(%
\mathrm{I}_{r _{1},r_2}^{1})$ $\text{and}\;\;(\mathrm{I}_{s _{1},s_2}^{0})$
hold.

\item[$(S_{4})$] For $i=1,2$ there exist $\rho _{i},r _{i},s_i\in (0,\infty )$
with $\rho _{i}<r _{i}$ and $r _{i}/c_i<s _{i}$ such that $(\mathrm{I}%
_{\rho _{1},\rho_2}^{1}),\;\;(\mathrm{I}_{r _{1},r_2}^{0})$ $\text{and}\;\;(\mathrm{I%
}_{s _{1},s_2}^{1})$ hold.
\end{enumerate}

The system \eqref{syst} has at least three nontrivial solutions in $K$ if one
of the following conditions holds.

\begin{enumerate}
\item[$(S_{5})$] For $i=1,2$ there exist $\rho _{i},r _{i},s_i,\sigma_i\in
(0,\infty )$ with $\rho _{i}/c_i<r _{i}<s _{i}$ and $s _{i}/c_i<\sigma
_{i}$ such that $(\mathrm{I}_{\rho _{1},\rho_2}^{0})\;\;[\text{or}\;(\mathrm{I}%
_{\rho _{1},\rho_2}^{0})^{\star }],$ $(\mathrm{I}_{r _{1},r_2}^{1}),\;\;(\mathrm{I}%
_{s_1,s_2}^{0})\;\;\text{and}\;\;(\mathrm{I}_{\sigma _{1},\sigma_2}^{1})$ hold.

\item[$(S_{6})$] For $i=1,2$ there exist $\rho _{i},r _{i},s_i,\sigma_i\in
(0,\infty )$ with $\rho _{i}<r _{i}$ and $r _{i}/c_i<s _{i}<\sigma _{i}$
such that $(\mathrm{I}_{\rho _{1},\rho_2}^{1}),\;\;(\mathrm{I}_{r
_{1},r_2}^{0}),\;\;(\mathrm{I}_{s _{1},s_2}^{1})$ $\text{and}\;\;(\mathrm{I}%
_{\sigma _{1},\sigma_2}^{0})$ hold.
\end{enumerate}
\end{thm}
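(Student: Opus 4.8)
The plan is to deduce every case of the theorem from Lemmas \ref{ind1b}, \ref{idx0b1} and \ref{idx0b3} by pairing an index-$1$ conclusion on a set $K_{\rho_1,\rho_2}$ with an index-$0$ conclusion on a set $V_{\rho_1,\rho_2}$, sandwiching the sets by means of Lemma \ref{esca}, and invoking property $(4)$ of the fixed point index (recall that $T$ is compact by Lemma \ref{compact}). The only extra ingredient needed is that $0$ lies inside the relevant sets, so that the fixed points produced are nontrivial.

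First I would record two preliminary facts. (i) For every $\rho_1,\rho_2>0$ one has $0\in K_{\rho_1,\rho_2}$ and $0\in V_{\rho_1,\rho_2}$, since $\min_{[a_i,b_i]}0=0<\rho_i$; hence $0$ lies in the closure of each such set. (ii) Both $K_{\rho_1,\rho_2}$ and $V_{\rho_1,\rho_2}$ have the form $\Omega\cap K$ with $\Omega$ open and bounded in $X$, because $(u,v)\mapsto\|u\|_\infty$ and $(u,v)\mapsto\min_{[a_i,b_i]}u$ are continuous; being bounded, their closures are proper subsets of the unbounded cone $K$, so property $(4)$ is applicable to them. Next, from the first bullet of Lemma \ref{esca} I would extract the chain of inclusions $\overline{K_{\rho_1,\rho_2}}\subset K_{r_1,r_2}$ whenever $\rho_i<r_i$; $\overline{V_{\rho_1,\rho_2}}\subset \overline{K_{\rho_1/c_1,\rho_2/c_2}}\subset K_{r_1,r_2}$ whenever $\rho_i/c_i<r_i$; and, with the roles reversed, $\overline{K_{r_1,r_2}}\subset V_{s_1,s_2}$ whenever $r_i<s_i$ and $\overline{V_{r_1,r_2}}\subset K_{s_1,s_2}$ whenever $r_i/c_i<s_i$. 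If $T$ already has a fixed point on one of the boundaries in play, it is nontrivial and we are done; otherwise the hypotheses $(\mathrm{I}^\bullet)$ guarantee, via the proofs of the quoted Lemmas, that $T$ is fixed-point free there, so the indices are well defined and equal to the stated values.

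Then the single-solution cases are immediate. Under $(S_1)$, Lemma \ref{idx0b1} (or Lemma \ref{idx0b3}) gives $i_K(T,V_{\rho_1,\rho_2})=0$ and Lemma \ref{ind1b} gives $i_K(T,K_{r_1,r_2})=1$; since $\rho_i/c_i<r_i$ forces $\overline{V_{\rho_1,\rho_2}}\subset K_{r_1,r_2}$, property $(4)$ yields a fixed point in $K_{r_1,r_2}\setminus\overline{V_{\rho_1,\rho_2}}$, which is nontrivial because $0\in\overline{V_{\rho_1,\rho_2}}$. Case $(S_2)$ is the mirror image: $i_K(T,K_{\rho_1,\rho_2})=1$, $i_K(T,V_{r_1,r_2})=0$, $\overline{K_{\rho_1,\rho_2}}\subset V_{r_1,r_2}$ because $\rho_i<r_i$, and property $(4)$ delivers a nontrivial fixed point in $V_{r_1,r_2}\setminus\overline{K_{\rho_1,\rho_2}}$. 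For the multiplicity statements I would simply stack the inclusions. Under $(S_3)$, with $\overline{V_{\rho_1,\rho_2}}\subset K_{r_1,r_2}$ and $\overline{K_{r_1,r_2}}\subset V_{s_1,s_2}$ and indices $0,1,0$ respectively, property $(4)$ applied to the two consecutive pairs produces fixed points in $K_{r_1,r_2}\setminus\overline{V_{\rho_1,\rho_2}}$ and in $V_{s_1,s_2}\setminus\overline{K_{r_1,r_2}}$; these two sets are disjoint and neither contains $0$, giving two distinct nontrivial solutions. Case $(S_4)$ is the same with the $K$-sets and $V$-sets interchanged, and $(S_5)$, $(S_6)$ just append a fourth set to the chain (indices $0,1,0,1$, resp. $1,0,1,0$), producing three solutions lying in three pairwise disjoint difference-sets.

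The only point requiring care — the ``hard part'', such as it is — is the bookkeeping of the closures: each hypothesis is written with exactly the strict inequality ($\rho_i<r_i$, or $\rho_i/c_i<r_i$, or $r_i/c_i<s_i$, and so on) that is needed so that Lemma \ref{esca} places the closure of each inner region strictly inside the next, which is what makes property $(4)$ applicable at every step and what guarantees that the solutions are pairwise distinct and different from $0$.
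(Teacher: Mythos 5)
Your argument is correct and is precisely the standard one the paper has in mind: the paper omits the proof, stating only that it ``follows by the properties of the fixed point index,'' and your combination of Lemmas \ref{ind1b}, \ref{idx0b1}/\ref{idx0b3} with the inclusions of Lemma \ref{esca} and property $(4)$ of the index is exactly that argument, with the closure bookkeeping and the nontriviality of the solutions handled correctly.
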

In the case of $[a_1,b_1]=[a_2,b_2]$ we can relax the assumptions on the nonlinearities $f_i$. In the following two Lemmas we provide a modification of the conditions   $(\mathrm{I}_{\rho
_{1},\rho_2}^{0})$ and $(\mathrm{I}_{\rho _{1},\rho_2}^{0})^{\star }$, similar to the one in \cite{df-gi-do}. An analogous of the Theorem \ref{mult-sys} holds in this case, we omit the statement of this result.

\begin{lem}\label{idx0b1eq}
Assume that $[a_1,b_1]=[a_2,b_2]=:[a,b]$ and that
\begin{enumerate}
\item[$(\mathrm{\underline{I}}^{0}_{\rho_1,\rho_2})$]  there exist $\rho_1,\rho_2>0$ such that for every $i=1,2$
\begin{equation}\label{eqMesteq}
\underline{f}_{i,(\rho_1, \rho_2)} > M_i,
\end{equation}{}
where
\begin{multline*}
\underline{f}_{1,(\rho_1,{\rho_2})}=\inf \Bigl\{ \frac{f_1(t,u,v)}{ \rho_1}:\; (t,u,v)\in [a,b]\times[\rho_1,\rho_1/c_1]\times[0, \rho_2/c_2]\Bigr\},\\
\underline{f}_{2,(\rho_1,{\rho_2})}=\inf \Bigl\{ \frac{f_2(t,u,v)}{ \rho_2}:\; (t,u,v)\in [a,b]\times[0,\rho_1/c_1]\times[\rho_2, \rho_2/c_2]\Bigr\}.
\end{multline*}
\end{enumerate}
Then $i_{K}(T,V_{\rho_1,\rho_2})=0$.
\end{lem}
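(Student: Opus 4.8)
The plan is to mimic the proof of Lemma~\ref{idx0b1} almost verbatim, exploiting the fact that the hypothesis $[a_1,b_1]=[a_2,b_2]=[a,b]$ lets us know the sign of \emph{both} components on the \emph{same} interval, so that on $[a,b]$ each $u$ and $v$ is nonnegative regardless of which component realizes the minimum $\rho_i$. First I would take $e(t)\equiv 1$, note $(e,e)\in K$, and suppose for contradiction that there exist $(u,v)\in\partial V_{\rho_1,\rho_2}$ and $\lambda\geq 0$ with $(u,v)=T(u,v)+\lambda(e,e)$. By Lemma~\ref{esca}, since $(u,v)\in\partial V_{\rho_1,\rho_2}$, for some $i\in\{1,2\}$ we have $\rho_i\leq w_i(t)\leq \rho_i/c_i$ on $[a_i,b_i]=[a,b]$, and for $j\neq i$ we have $0\leq w_j(t)\leq \rho_j/c_j$ on $[a_j,b_j]=[a,b]$. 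Hence on the common interval $[a,b]$ both components are nonnegative, with the distinguished one pinned between $\rho_i$ and $\rho_i/c_i$.

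Assume without loss of generality that $i=1$, so that for all $t\in[a,b]$ we have $\rho_1\leq u(t)\leq\rho_1/c_1$, $\min_{[a,b]}u=\rho_1$, and $0\leq v(t)\leq\rho_2/c_2$. Then, exactly as in Lemma~\ref{idx0b1}, from the first component of the fixed point equation and the sign condition $k_1(t,s)\geq c_1\Phi_1(s)\geq 0$ for $t\in[a,b]$, I would drop the integral over $[0,1]\setminus[a,b]$ (the integrand being nonnegative there on $[a,b]$) and write, for $t\in[a,b]$,
\[
u(t)\geq \int_a^b k_1(t,s)g_1(s)f_1(s,u(s),v(s))\,ds + \lambda.
\]
Since the argument $(s,u(s),v(s))$ lies in $[a,b]\times[\rho_1,\rho_1/c_1]\times[0,\rho_2/c_2]$ for $s\in[a,b]$, the definition of $\underline{f}_{1,(\rho_1,\rho_2)}$ gives $f_1(s,u(s),v(s))\geq \rho_1\,\underline{f}_{1,(\rho_1,\rho_2)}$, and taking the minimum over $t\in[a,b]$ and using $1/M_1=\inf_{t\in[a,b]}\int_a^b k_1(t,s)g_1(s)\,ds$ yields
\[
\rho_1=\min_{t\in[a,b]}u(t)\geq \rho_1\,\underline{f}_{1,(\rho_1,\rho_2)}\frac{1}{M_1}+\lambda.
\]
By hypothesis \eqref{eqMesteq}, $\underline{f}_{1,(\rho_1,\rho_2)}/M_1>1$, so $\rho_1>\rho_1+\lambda\geq\rho_1$, a contradiction. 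The case $i=2$ is symmetric, using the second component and $\underline{f}_{2,(\rho_1,\rho_2)}$. Thus $(u,v)\neq T(u,v)+\lambda(e,e)$ on $\partial V_{\rho_1,\rho_2}$ for all $\lambda\geq 0$, and property~(1) of the fixed point index Lemma gives $i_K(T,V_{\rho_1,\rho_2})=0$.

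The only real point requiring care — and the place where the hypothesis $[a_1,b_1]=[a_2,b_2]$ is essential — is the verification that the argument of $f_1$ (resp.\ $f_2$) actually lands in the domain over which the infimum $\underline{f}_{i,(\rho_1,\rho_2)}$ is taken: one needs the \emph{non-distinguished} component to be nonnegative on precisely the interval where the distinguished component's integral is being estimated, which is automatic here because the two intervals coincide. In the general Lemma~\ref{idx0b1} this forced the symmetric $[-\rho_j/c_j,\rho_j/c_j]$ range; with the common interval one gets the sharper $[0,\rho_j/c_j]$ range, which is exactly the relaxation claimed. Everything else is the routine index computation already carried out above, so no further obstacle arises.
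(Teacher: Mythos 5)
Your proof is correct and follows essentially the same route as the paper's: assume $(u,v)=T(u,v)+\lambda(e,e)$ on $\partial V_{\rho_1,\rho_2}$, use the common interval $[a,b]$ to locate $(s,u(s),v(s))$ in the domain of the infimum $\underline{f}_{i,(\rho_1,\rho_2)}$, and derive $\rho_1\geq\rho_1\underline{f}_{1,(\rho_1,\rho_2)}M_1^{-1}+\lambda>\rho_1+\lambda$. Your closing remark correctly identifies why the hypothesis $[a_1,b_1]=[a_2,b_2]$ is what permits the sharper range $[0,\rho_j/c_j]$ for the non-distinguished component.
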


\begin{proof}
As in the proof of Lemma \ref {idx0b1} suppose that there exist $(u,v)\in \partial V_{\rho_1,\rho_2 }$ and
$\lambda \geq 0$ such that $(u,v)=T(u,v)+\lambda (e,e)$.
Without loss of generality, we can assume that for all $t\in [a,b]$ we have
$$
\rho_1\leq u(t)\leq {\rho_1/c_1},\\\ \min u(t)=\rho_1 \\\ \text{and  }\\\ 0\leq v(t)\leq {\rho_2/c_2}.
$$
Then, for $t\in [a,b]$, we obtain
\begin{equation*}
  u(t)\geq \int_{a}^{b}k_1(t,s)g_1(s)f_1(s,u(s),v(s))\,ds+{\lambda}.
\end{equation*}
Taking the minimum over $[a,b]$ gives
\begin{equation*}
\rho_1= \min_{t\in [a,b]}u(t)\geq {\rho_1 }\underline{f}_{1,(\rho_1 ,{\rho_2})}\frac{1}{M_{1}}+{\lambda}.
\end{equation*}
Using the hypothesis \eqref{eqMesteq} we obtain $\rho_1>\rho_1 +\lambda $, a contradiction.
\end{proof}

\begin{lem}\label{idx0b3eq}
Assume that $[a_1,b_1]=[a_2,b_2]=:[a,b]$ and that
\begin{enumerate}
\item[$(\mathrm{\underline{I}}^{0}_{\rho_1,\rho_2})^{\star}$] there exist $\rho_1,\rho_2>0$ such that for some $i\in\{1,2\}$ we have
\begin{equation}\label{eqMest1eq}
\underline{f}^*_{i,(\rho_1, \rho_2)}>M_i,
\end{equation}{}
\end{enumerate}
where
\begin{equation*}
\underline{f}^*_{i,(\rho_1,{\rho_2})}=\inf \Bigl\{ \frac{f_i(t,u,v)}{ \rho_i}:\; (t,u,v)\in [a,b]\times[0,\rho_1/c_1]\times[0, \rho_2/c_2]\Bigr\}.
\end{equation*}
Then $i_{K}(T,V_{\rho_1,\rho_2 })=0$.
\end{lem}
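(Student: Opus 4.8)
The plan is to follow the pattern of the proofs of Lemmas \ref{idx0b1} and \ref{idx0b3}, invoking property (1) of the fixed point index Lemma recalled above with the element $e(t)\equiv 1$; since $c_i\in(0,1]$ we have $(e,e)\in K\setminus\{0\}$. So I would assume, for contradiction, that there exist $(u,v)\in\partial V_{\rho_1,\rho_2}$ and $\lambda\ge 0$ with $(u,v)=T(u,v)+\lambda(e,e)$, and derive an absurdity; this yields $i_K(T,V_{\rho_1,\rho_2})=0$. By the symmetry of the hypothesis and of the set $V_{\rho_1,\rho_2}$ (here using $[a,b]=[a_1,b_1]=[a_2,b_2]$), it suffices to treat the case in which \eqref{eqMest1eq} holds for $i=1$.

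On $\partial V_{\rho_1,\rho_2}$, the second bullet of Lemma \ref{esca} gives $\min_{t\in[a,b]}u(t)\le\rho_1$, and the third bullet gives $0\le u(t)\le\rho_1/c_1$ and $0\le v(t)\le\rho_2/c_2$ for every $t\in[a,b]$ (in both cases, whichever component realizes its minimum value $\rho_i$). Hence for a.e.\ $s\in[a,b]$ the triple $(s,u(s),v(s))$ lies in $[a,b]\times[0,\rho_1/c_1]\times[0,\rho_2/c_2]$, which is the domain in the definition of $\underline{f}^{*}_{1,(\rho_1,\rho_2)}$, so $f_1(s,u(s),v(s))\ge\rho_1\,\underline{f}^{*}_{1,(\rho_1,\rho_2)}$ there. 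Since $k_1(t,s)\ge c_1\Phi_1(s)\ge 0$ for $t\in[a_1,b_1]=[a,b]$ and a.e.\ $s$, and $g_1\ge 0$, $f_1\ge 0$, the first component of the identity $(u,v)=T(u,v)+\lambda(e,e)$ gives, for $t\in[a,b]$,
\[ u(t)\ \ge\ \int_a^b k_1(t,s)g_1(s)f_1(s,u(s),v(s))\,ds+\lambda\ \ge\ \rho_1\,\underline{f}^{*}_{1,(\rho_1,\rho_2)}\int_a^b k_1(t,s)g_1(s)\,ds+\lambda. \]
Taking the minimum over $t\in[a,b]$ and recalling $1/M_1=\inf_{t\in[a,b]}\int_a^b k_1(t,s)g_1(s)\,ds$ gives $\min_{t\in[a,b]}u(t)\ge\rho_1\,\underline{f}^{*}_{1,(\rho_1,\rho_2)}/M_1+\lambda$; by \eqref{eqMest1eq} the right-hand side is strictly larger than $\rho_1+\lambda\ge\rho_1$, contradicting $\min_{t\in[a,b]}u(t)\le\rho_1$. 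The case $i=2$ is identical with the roles of the two components interchanged.

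The one point that needs a little care is the first half of the second paragraph: unlike in Lemma \ref{idx0b1eq}, we now control only one of the two nonlinearities, so on $\partial V_{\rho_1,\rho_2}$ it may be $v$, not $u$, that attains the boundary value $\rho_2$, and then $u$ need not equal $\rho_1$ anywhere. What rescues the argument is that the a priori bounds $\min_{[a,b]}u\le\rho_1$ and $0\le u\le\rho_1/c_1$, $0\le v\le\rho_2/c_2$ on $[a,b]$ still hold on the whole boundary; enlarging the domain of the infimum defining $\underline{f}^{*}_{1,(\rho_1,\rho_2)}$ so that \emph{both} variables run over $[0,\rho_i/c_i]$ is precisely what is needed for the estimate to go through in this situation. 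Everything else is routine and parallels the corresponding argument in \cite{df-gi-do}.
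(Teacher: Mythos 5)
Your proof is correct and follows essentially the same route as the paper, which simply records the a priori bounds $\min_{[a,b]}u\le\rho_1$, $0\le u\le\rho_1/c_1$, $0\le v\le\rho_2/c_2$ on $\partial V_{\rho_1,\rho_2}$ and then refers back to the argument of Lemma~\ref{idx0b3}. Your explicit discussion of why $\min_{[a,b]}u\le\rho_1$ (rather than equality) suffices, and of the sign of $k_1(t,\cdot)$ for $t\in[a,b]$, just fills in details the paper leaves implicit.
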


\begin{proof}
Suppose that the condition \eqref{eqMest1eq} holds for $i=1$.
Let  $(u,v)\in \partial V_{\rho_1,\rho_2 }$ and
$\lambda \geq 0$ such that $(u,v)=T(u,v)+\lambda (e,e)$.
So  for all $t\in [a,b]$ we have $\min u(t)\leq \rho_1$, $0 \leq u(t)\leq \rho_1/c_1$,  $0 \leq v(t)\leq \rho_2/c_2$ and  $\min v(t)\leq \rho_2$.
Now, the proof follows as the one of Lemma \ref {idx0b3}.
\end{proof}
\section{Non-existence results}
We now show a non-existence result for problem \eqref{syst}.

\begin{thm} Assume that one of the following conditions holds. 
\begin{enumerate}
\item For  $i=1,2$, 
\begin{equation}\label{cond1}
f_i(t,u_1,u_2)<m_i|u_i|\ \text{for every}\ t\in [0,1] \text{        and        } u_i\neq 0.
\end{equation}
\item For  $i=1,2$, 
\begin{equation}\label{cond2}
f_i(t,u_1,u_2)>M_i u_i\ \text{for every}\ t\in [a_i,b_i] \text{        and        }  u_i>0.
\end{equation}
\item There exists $i\in\{1,2\}$ such that \eqref{cond1} is verified for $f_i$ and for  $j\neq i$  condition \eqref{cond2}  is verified for $f_j$.
\end{enumerate}
Then there is no nontrivial solution of the system \eqref{syst} in $K$. 
\end{thm}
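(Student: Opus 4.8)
The plan is to argue by contradiction. Suppose $(u,v)\in K$ solves \eqref{syst} with $\|(u,v)\|\neq 0$, so that at least one of $\|u\|_\infty,\|v\|_\infty$ is strictly positive; I will show that the component which is nonzero cannot satisfy its own integral equation. I would first record that the structural hypotheses force $m_i,M_i\in(0,\infty)$: since $k_i(t,s)\ge c_i\Phi_i(s)\ge 0$ for $t\in[a_i,b_i]$ and $\int_{a_i}^{b_i}\Phi_i g_i>0$, we get $\inf_{t\in[a_i,b_i]}\int_{a_i}^{b_i}k_i(t,s)g_i(s)\,ds\ge c_i\int_{a_i}^{b_i}\Phi_i g_i>0$, while $\sup_{t\in[0,1]}\int_0^1|k_i(t,s)|g_i(s)\,ds$ lies between this positive number and $\int_0^1\Phi_i g_i<\infty$.

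For case (2), assume $u\not\equiv 0$ (the subcase $v\not\equiv 0$, and the symmetric role of the two indices, are treated identically). Since $u\in\tilde{K}_{1}$ we have $u(t)\ge c_1\|u\|_\infty>0$ on $[a_1,b_1]$, so $\mu:=\min_{t\in[a_1,b_1]}u(t)>0$. For $t\in[a_1,b_1]$ the kernel $k_1(t,\cdot)$ is nonnegative a.e., hence the part of the integral over $[0,1]\setminus[a_1,b_1]$ is $\ge 0$ and can be dropped; applying \eqref{cond2} on $[a_1,b_1]$, where $u>0$, I get
\[
u(t)\ \ge\ \int_{a_1}^{b_1}k_1(t,s)g_1(s)f_1(s,u(s),v(s))\,ds\ >\ M_1\!\int_{a_1}^{b_1}k_1(t,s)g_1(s)u(s)\,ds\ \ge\ M_1\mu\!\int_{a_1}^{b_1}k_1(t,s)g_1(s)\,ds\ \ge\ \mu ,
\]
the strict inequality using that $k_1g_1\ge c_1\Phi_1 g_1$ is positive on a set of positive measure inside $[a_1,b_1]$ while $f_1(s,u(s),v(s))-M_1u(s)>0$ there, and the last step being the definition of $M_1$. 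Evaluating at a point of $[a_1,b_1]$ where $u$ attains $\mu$ gives $\mu>\mu$, the desired contradiction.

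For case (1), assume $u\not\equiv 0$ (again the other situation is symmetric). The first step is to upgrade \eqref{cond1}: since $f_1(t,\cdot,\cdot)$ is continuous and $f_1\ge 0$, letting $u_1\to 0^+$ in \eqref{cond1} gives $f_1(t,0,u_2)=0$ for a.e.\ $t$, so in fact $f_1(s,u(s),v(s))\le m_1|u(s)|\le m_1\|u\|_\infty$ for a.e.\ $s$, with strict inequality wherever $u(s)\neq 0$. Set $\psi(t):=\int_0^1|k_1(t,s)|g_1(s)f_1(s,u(s),v(s))\,ds$; this is continuous in $t$ by dominated convergence, using $\lim_{t\to\tau}|k_1(t,s)-k_1(\tau,s)|=0$ a.e.\ and the dominating function $\Phi_1 g_1\phi_{1,r}\in L^1$ with $r=\|(u,v)\|$. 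From $u=T_1(u,v)$ we have $\|u\|_\infty\le\max_{t\in[0,1]}\psi(t)$, while the bound on $f_1$ gives $\psi(t)\le m_1\|u\|_\infty\int_0^1|k_1(t,s)|g_1(s)\,ds\le\|u\|_\infty$ for all $t$; hence this whole chain consists of equalities. I would then chase them: at a maximiser $t_0$ of $\psi$, equality forces $|k_1(t_0,s)|g_1(s)\bigl(m_1|u(s)|-f_1(s,u(s),v(s))\bigr)=0$ a.e., and since the bracket is strictly positive wherever $u(s)\neq 0$, this gives $|k_1(t_0,s)|g_1(s)=0$ for a.e.\ $s$ with $u(s)\neq 0$. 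Therefore $\|u\|_\infty=\psi(t_0)$ reduces to an integral over $\{s:u(s)=0\}$, on which $f_1(s,0,v(s))=0$; so $\|u\|_\infty=0$, a contradiction.

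For case (3), if \eqref{cond1} holds for $f_i$ and \eqref{cond2} for $f_j$ with $j\neq i$, then whichever of $u,v$ is nonzero places us in one of the two arguments above. The routine parts — the bounds on $m_i,M_i$, the continuity of $\psi$, and the measure-theoretic bookkeeping — I would write out in full. The one genuinely delicate point, and the main obstacle, is case (1): \eqref{cond1} is assumed only for $u_i\neq 0$ and only with a strict sign, so one must first use the continuity of $f_i$ to extend the bound to all of $[0,1]$ and then make the strict inequality survive both the integration and the supremum over $t$; the equality-chasing argument above is the clean way to do this without imposing any extra positivity on $k_1$ away from $[a_1,b_1]$. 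Case (2) is easier precisely because there the relevant kernel is already nonnegative on $[a_i,b_i]$, so one works with $\min_{[a_i,b_i]}$ rather than $\|\cdot\|_\infty$ and the strict sign transfers immediately.
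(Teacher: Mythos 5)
Your proof is correct and follows the paper's overall strategy (argue by contradiction componentwise, using the constants $m_i$ on $[0,1]$ for \eqref{cond1} and $M_i$ on $[a_i,b_i]$ for \eqref{cond2}); your case (2) and case (3) are essentially identical to the paper's. Where you genuinely diverge is case (1): the paper simply writes the pointwise chain $|u(t)|<m_1\int_0^1|k_1(t,s)|g_1(s)|u(s)|\,ds\le \|u\|_\infty$ and then ``takes the supremum,'' which silently assumes both that $f_1(s,u(s),v(s))<m_1|u(s)|$ can be integrated against $|k_1(t,\cdot)|g_1$ without losing strictness (false if $u$ vanishes somewhere, since \eqref{cond1} says nothing at $u_1=0$, and also false if the strict inequality only holds where the kernel vanishes) and that strict inequality survives the supremum over $t$. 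Your version repairs exactly these points: you first deduce $f_1(t,0,\cdot)=0$ from continuity and $f_1\ge 0$, then run an equality-chasing argument at a maximiser of $\psi$, concluding either a strict drop or that the integral localises to $\{u=0\}$ where the integrand vanishes. This is a more careful route that the paper's terse proof actually needs; the analogous issue in case (2) is avoided, as you note, because the minimum over the compact interval $[a_1,b_1]$ is attained and the kernel is bounded below by $c_1\Phi_1$ there. No gaps.
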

\begin{proof}
$(1)$ Assume, on the contrary, that there exists $(u,v)\in K$ such that $(u,v)=T(u,v)$ and $(u,v)\neq (0,0)$. Let, for example, be $\|u\|_\infty \neq 0$. 
Then, for $t\in [0,1]$,
\begin{align*}|u(t)|  =&\left|\int_0^1k_1(t,s)g_1(s)f_1(s,u(s),v(s))ds\right|\\ \le & \int_0^1|k_1(t,s)|g_1(s)f_1(s,u(s),v(s))\,ds\\
< & m_1 \int_0^1|k_1(t,s)|g_1(s)|u(s)|\,ds\\
\le & m_1 \|u\|_\infty\int_0^1|k_1(t,s)|g_1(s)\,ds.
\end{align*}
 Taking the supremum for $t\in [0,1]$, we have
$$
\|u\|_\infty< m_1 \|u\|_\infty\sup_{t\in[0,1]}\int_0^1|k_1(t,s)|g_1(s)\,ds =\|u\|_\infty,
$$
a contradiction.\par
$(2)$ Assume, on the contrary, that there exists $(u,v)\in K$ such that $(u,v)=T(u,v)$ and$(u,v)\neq (0,0)$. Let, for example, be $\|u\|_\infty\neq 0$.  Then, for $t\in [a_1,b_1]$
\begin{align*}
u(t)= & \int_0^1 k_1(t,s)g_1(s)f_1(s,u(s),v(s))ds\geq \int_{a_1}^{b_1} k_1(t,s)g_1(s)f_1(s,u(s),v(s))ds\\ 
> & M_1 \int_{a_1}^{b_1} k_1(t,s)g_1(s)u(s)ds.
\end{align*}
Taking the infimum  for $t\in [a_1,b_1]$, we obtain
$$
\min_{t\in[a_1,b_1]}u(t)> M_1 \inf_{t\in[a_1,b_1]}\int_{a_1}^{b_1} k_1(t,s)g_1(s)u(s)\,ds.
$$
Take $\sigma_1=\min_{t\in[a_1,b_1]}u(t)$. Thus we get
$$
\sigma_1>M_1 \sigma_1 \inf_{t\in[a_1,b_1]}\int_{a_1}^{b_1} k_1(t,s)g_1(s)\,ds =\sigma_1,
$$
a contradiction.\par
$(3)$ Assume, on the contrary, that there exists $(u,v)\in K$ such that $(u,v)=T(u,v)$ and $(u,v)\neq (0,0)$. Let, for example, be $\|u\|_\infty \neq 0$. 
Then the function $f_1$ satisfies either \eqref{cond1} or \eqref{cond2} and the proof follows as in the previous cases.
\end{proof}

\section{Eigenvalue criteria for the existence of nontrivial solutions}\label{seceigen}

In order to state our eigenvalue comparison results, we consider, in a similar way as in \cite{gi-pp-ft}, the following operators on $C[0,1]\times C[0,1]$
\begin{equation*}\label{opL}
L(u,v)(t):=
\left(
\begin{array}{c}
 \int_{0}^{1}|k_1(t,s)|g_1(s)u(s)\,ds\\
\int_{0}^{1}|k_2(t,s)|g_2(s)v(s)\,ds%
\end{array}
\right)
:=
\left(
\begin{array}{c}
L_1(u)(t) \\
L_2(v)(t)%
\end{array}
\right) ,
\end{equation*}
and
\begin{equation*}\label{opL^+}
L^+(u,v)(t):= 
\left(
\begin{array}{c}
 \int_{a_1}^{b_1}k_1^+(t,s)g_1(s)u(s)\,ds\\
\int_{a_2}^{b_2}k_2^+(t,s)g_2(s)v(s)\,ds%
\end{array}
\right)
:=
\left(
\begin{array}{c}
L_1^+(u)(t) \\
L_2^+(v)(t)%
\end{array}
\right).
\end{equation*}
 We denote by $P$ the cone of positive functions, namely 
$$
P:=\{w\in C[0,1]:\ w(t)\geq 0,  t\in [0,1]\}.
$$
\begin{thm}\label{lcomp} 
The operators $L$ and $L^+$ are compact and map $P\times P$ into $(P\times P)\cap K$.
\end{thm}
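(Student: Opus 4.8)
The plan is to handle $L$ and $L^+$ together, since both are Hammerstein-type linear operators whose kernels, $|k_i(t,s)|g_i(s)$ and $k_i^+(t,s)g_i(s)$, inherit from the standing hypotheses exactly the features exploited for $T$ in Lemma~\ref{compact}. First I would record that $0\le k_i^+(t,s)\le|k_i(t,s)|\le\Phi_i(s)$ for $t\in[0,1]$ and a.e.\ $s$, and that, since $x\mapsto|x|$ and $x\mapsto x^+:=\max\{x,0\}$ are $1$-Lipschitz, the hypothesis $\lim_{t\to\tau}|k_i(t,s)-k_i(\tau,s)|=0$ gives $\lim_{t\to\tau}\bigl||k_i(t,s)|-|k_i(\tau,s)|\bigr|=0$ and $\lim_{t\to\tau}|k_i^+(t,s)-k_i^+(\tau,s)|=0$ for a.e.\ $s$. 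Because $\Phi_i g_i\in L^1[0,1]$ and $g_i\ge0$, the dominated convergence theorem then shows $L_i(w)$ and $L_i^+(w)$ are continuous for every $w\in C[0,1]$, so $L$ and $L^+$ map $C[0,1]\times C[0,1]$ into itself, and they are bounded since $\|L_i(w)\|_\infty\le\|\Phi_i g_i\|_{L^1}\|w\|_\infty$. For compactness, on a bounded set $\{\|w\|_\infty\le r\}$ the domination $|k_i(t,s)|g_i(s)|w(s)|\le r\,\Phi_i(s)g_i(s)$ (and the same bound with $k_i^+$) gives uniform boundedness of the images, while $|L_i(w)(t)-L_i(w)(\tau)|\le r\int_0^1\bigl||k_i(t,s)|-|k_i(\tau,s)|\bigr|g_i(s)\,ds\to0$ as $t\to\tau$, uniformly in $w$, gives equicontinuity; Arzel\`{a}--Ascoli then yields compactness of each scalar operator, hence of $L$ and $L^+$. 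These are precisely the routine arguments invoked in Lemma~\ref{compact}.

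It remains to prove invariance. If $(u,v)\in P\times P$, then $u,v\ge0$ and, since $|k_i|\ge0$, $k_i^+\ge0$ and $g_i\ge0$ a.e., every component of $L(u,v)$ and of $L^+(u,v)$ is a non-negative function, so $L(u,v),L^+(u,v)\in P\times P$. For the cone membership I would argue componentwise, mimicking the estimate in the proof of Lemma~\ref{compact}. From $|k_1(t,s)|\le\Phi_1(s)$ one gets $\|L_1(u)\|_\infty\le\int_0^1\Phi_1(s)g_1(s)u(s)\,ds$, and for $t\in[a_1,b_1]$ the inequality $|k_1(t,s)|\ge k_1(t,s)\ge c_1\Phi_1(s)$ gives $L_1(u)(t)\ge c_1\int_0^1\Phi_1(s)g_1(s)u(s)\,ds\ge c_1\|L_1(u)\|_\infty$, so $L_1(u)\in\tilde{K_1}$. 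For $L_1^+$ one uses $k_1^+(t,s)\le\Phi_1(s)$ to get $\|L_1^+(u)\|_\infty\le\int_{a_1}^{b_1}\Phi_1(s)g_1(s)u(s)\,ds$, and $k_1^+(t,s)\ge k_1(t,s)\ge c_1\Phi_1(s)$ for $t\in[a_1,b_1]$ and a.e.\ $s\in[a_1,b_1]$ to get $L_1^+(u)(t)\ge c_1\int_{a_1}^{b_1}\Phi_1(s)g_1(s)u(s)\,ds\ge c_1\|L_1^+(u)\|_\infty$, so $L_1^+(u)\in\tilde{K_1}$. Running the same computation on the second component with index $2$ shows $L(u,v),L^+(u,v)\in K$, which completes the proof.

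The argument is essentially a transcription of Lemma~\ref{compact}, so I do not expect a genuine obstacle; the only two small points needing care are the Lipschitz facts used to transfer the $t$-continuity hypothesis from $k_i$ to $|k_i|$ and $k_i^+$, and the fact that $g_i$ is only assumed integrable against $\Phi_i$ (not bounded), so throughout the continuity and equicontinuity estimates one must keep the $L^1$ dominating function $r\,\Phi_i g_i$ rather than factoring $g_i$ out in $L^\infty$.
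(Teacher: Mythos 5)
Your proof is correct and follows essentially the same route as the paper: positivity of the kernels $|k_i|g_i$ and $k_i^+g_i$ gives invariance of $P\times P$, and the cone membership is obtained exactly as in Lemma~\ref{compact} from the two-sided bounds $k_i^+(t,s)\le|k_i(t,s)|\le\Phi_i(s)$ and $k_i^+(t,s)= |k_i(t,s)|=k_i(t,s)\ge c_i\Phi_i(s)$ for $t\in[a_i,b_i]$. The only difference is that you spell out the Arzel\`a--Ascoli compactness argument (including the Lipschitz transfer of the $t$-continuity hypothesis to $|k_i|$ and $k_i^+$), which the paper dismisses as routine.
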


\begin{proof}
 Note that the operators $L$ and $L^+$ map $P\times P$ into $P\times P$ (because they have a non-negative integral kernel) and are compact. We now show that they map $P\times P$ into $(P\times P)\cap K$. Firstly, we do this for the operator $L$. 
 
We observe that for every $i=1,2$ and for $t\in [0,1]$
\begin{equation*}
|k_i(t,s)|\le \Phi_i(s),
 \end{equation*}
and  that, for $t\in [a_i,b_i],$
 \begin{equation*}
 |k_i(t,s)|=k_i(t,s)\ge c_i\Phi_i(t).
 \end{equation*}
 Thus, with a similar proof as the one in Lemma \ref {compact}, we obtain, for $(u,v)\in P\times P$ and $t\in [0,1]$, $L(u,v)\in K$. 
 A similar proof works for $L^+$, since for every $i=1,2$ and $t\in [0,1]$, we have
 \begin{equation*}
|k_i^+(t,s)|\leq |k_i(t,s)|\le \Phi_i(s),
 \end{equation*}
and, for $t\in [a_i,b_i],$
 \begin{equation*}
 k_i^+(t,s)=k_i(t,s)\ge c_i\Phi_i(t).
 \end{equation*}{}
\end{proof}
We recall that $\lambda$ is an \textit{eigenvalue} of a linear operator $\Gamma$ with corresponding eigenfunction $\varphi$ if $\varphi \neq 0$ and ${\lambda}\varphi=\Gamma \varphi$. The reciprocals of nonzero
eigenvalues are called \emph{characteristic values} of $\Gamma$. 
We will denote the \textit{spectral radius} of $\Gamma$ by $r(\Gamma):=\lim_{n\to\infty}\|\Gamma^n\|^{\frac{1}{n}}$ and its \textit{principal characteristic value} (the reciprocal of the spectral radius) by $\mu(\Gamma)=1/r(\Gamma)$.\par

The following Theorem is analogous to the ones in \cite{jw-gi-jlms,jwkleig} and is proven by using the facts that the considered operators  leave $P\times P$ invariant, that $P\times P$ is reproducing, combined with the well-known Krein-Rutman Theorem. 
\begin{thm}\label{specrad}
For $i=1,2$, the spectral radius of $L_i$ is nonzero and is an eigenvalue of $L_i$ with an eigenfunction in $P$. A similar result holds for $L_i^+$.
\end{thm}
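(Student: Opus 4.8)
The plan is to reduce the statement to a single application of the Krein--Rutman Theorem for each of the four operators $L_1, L_2, L_1^+, L_2^+$, all of which act on $C[0,1]$ and leave the cone $P$ invariant by Theorem \ref{lcomp} (restricted to one component). Since the argument for $L_i$ and for $L_i^+$ is the same, I would write it once for a generic operator $\Lambda \in \{L_i, L_i^+\}$. First I would recall that $\Lambda$ is a compact linear operator on the Banach space $C[0,1]$, that $P$ is a cone with nonempty interior (hence reproducing, i.e. $C[0,1] = \overline{P - P}$), and that $\Lambda(P) \subseteq P$; these are exactly the hypotheses of the Krein--Rutman Theorem, which then yields that $r(\Lambda)$ is an eigenvalue of $\Lambda$ with an eigenfunction $\varphi \in P \setminus \{0\}$, \emph{provided} $r(\Lambda) > 0$. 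So the only real content is to verify $r(\Lambda) \neq 0$.

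To show $r(\Lambda) > 0$, I would exhibit a single function $w \in P$ and a constant $\delta > 0$ such that $\Lambda w \geq \delta w$ pointwise; iterating gives $\Lambda^n w \geq \delta^n w$, hence $\|\Lambda^n\| \geq \delta^n \|w\|_\infty / \|w\|_\infty$ in an appropriate sense, and therefore $r(\Lambda) = \lim_n \|\Lambda^n\|^{1/n} \geq \delta > 0$. The natural choice is $w = \Phi_i\, g_i$ restricted suitably, or more simply the characteristic-type function built from the cone structure: take $w(t) := \Phi_i(t)$ on $[a_i,b_i]$ extended appropriately, or even just test against the constant function. Concretely, using the assumption $\int_{a_i}^{b_i}\Phi_i(s)g_i(s)\,ds > 0$ together with the lower bound $k_i(t,s) \geq c_i \Phi_i(s)$ (resp. $k_i^+(t,s) = k_i(t,s) \geq c_i\Phi_i(s)$) valid for $t \in [a_i,b_i]$, one checks that for any $w \in P$ with $w > 0$ on $[a_i,b_i]$,
\begin{equation*}
(\Lambda w)(t) \geq c_i \Phi_i(t) \int_{a_i}^{b_i} g_i(s) w(s)\,ds =: c_i \Phi_i(t)\, \kappa, \qquad t \in [a_i,b_i],
\end{equation*}
with $\kappa > 0$. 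Choosing $w = \Phi_i$ on $[a_i,b_i]$ (and, say, $0$ elsewhere, which still lies in $P$) gives $\Lambda w \geq (c_i \kappa) w$ on $[a_i,b_i]$, and since both sides vanish outside one can arrange $\Lambda w \geq \delta w$ on all of $[0,1]$ with $\delta = c_i \kappa > 0$; iterating as above yields $r(\Lambda) \geq \delta > 0$.

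The main obstacle is the bookkeeping in the positivity estimate: one must be careful that the test function $w$ chosen to witness $r(\Lambda) > 0$ actually lies in $P$ and that the inequality $\Lambda w \geq \delta w$ holds on the \emph{whole} interval $[0,1]$, not merely on $[a_i,b_i]$, since outside $[a_i,b_i]$ the kernel $k_i$ may change sign and the clean lower bound is unavailable. This is handled by supporting $w$ inside $[a_i,b_i]$; then the inequality is trivial off the support and follows from the displayed estimate on the support. Everything else — compactness of $\Lambda$ (already in Theorem \ref{lcomp}), the fact that $P$ is reproducing (it has interior point $e \equiv 1$), and invariance of $P$ — is either proved earlier or standard, so the proof is short once this estimate is in place.
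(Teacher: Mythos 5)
Your proposal is correct and follows essentially the same route as the paper, which simply invokes the cone invariance from Theorem~\ref{lcomp}, the fact that $P$ (resp.\ $P\times P$) is reproducing, and the Krein--Rutman theorem, deferring details to \cite{jw-gi-jlms, jwkleig}; your lower-bound argument for $r(\Lambda)>0$ is precisely the standard step those references supply. Two cosmetic repairs: the kernel estimate should keep $\Phi_i$ under the integral in the $s$ variable, giving $(\Lambda w)(t)\ge c_i\int_{a_i}^{b_i}\Phi_i(s)g_i(s)w(s)\,ds$ for $t\in[a_i,b_i]$, and the test function should be a \emph{continuous} bump supported in $[a_i,b_i]$ with $0\le w\le 1$ and $w>0$ on the interior (the truncation of $\Phi_i\in L^\infty[0,1]$ need not lie in $C[0,1]$), after which $\Lambda w\ge \kappa w$ on all of $[0,1]$ with $\kappa:=c_i\int_{a_i}^{b_i}\Phi_i(s)g_i(s)w(s)\,ds>0$ and the iteration goes through as you describe.
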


\begin{rem}
As a consequence of the two previous theorems, we have that the above mentioned eigenfunction is in $P\cap \tilde{K}_i$.
\end{rem}
We consider the following operator on $C[a_1,b_1]\times C[a_2,b_2]$:
\begin{equation*}\label{opL^+ri}
\bar{L} ^+(u,v)(t):=
\left(
\begin{array}{c}
 \int_{a_1}^{b_1}k_1^+(t,s)g_1(s)u(s)\,ds\\
\int_{a_2}^{b_2}k_2^+(t,s)g_2(s)v(s)\,ds%
\end{array}
\right)
:=
\left(
\begin{array}{c}
\bar{L} _1^+(u)(t) \\
\bar{L} _2^+(v)(t)%
\end{array}
\right).
\end{equation*}

In the recent papers \cite{jw-lms, jw-tmna}, Webb developed an elegant theory valid for $u_0$-positive linear operators. It turns out that our operators  $\bar{L}^+_i$ fit within this setting and, in particular, satisfy the assumptions of Theorem $3.4$ of \cite{jw-tmna}.  We state here a special case of Theorem $3.4$ of \cite{jw-tmna} that can be used for $\bar{L}^+_i$.
 
\begin{thm}\label{thmjeff}
Suppose that there exist $w\in C[a_i,b_i]\setminus \{0\}$, $w\geq 0$ and $\lambda>0$ such that $$\lambda w(t)\geq \bar{L}^+_iw(t),\ \text{for}\ t\in [a_i,b_i].$$ Then we have $r(\bar{L}^+_i)\leq \lambda$.
\end{thm}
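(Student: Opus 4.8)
The plan is to show that the hypothesis forces every eigenvalue of $\bar{L}^+_i$ associated with a nonnegative eigenfunction to be dominated by $\lambda$, and then to use the Krein--Rutman structure (Theorem~\ref{specrad}, transported to the operator $\bar{L}^+_i$ on $C[a_i,b_i]$) to conclude that the spectral radius itself is such an eigenvalue. First I would recall that $\bar{L}^+_i$ is a compact linear operator leaving the reproducing cone of nonnegative functions in $C[a_i,b_i]$ invariant; by the Krein--Rutman Theorem $r(\bar{L}^+_i)$ is an eigenvalue with an eigenfunction $\varphi\geq 0$, $\varphi\neq 0$. Write $r:=r(\bar{L}^+_i)$, so $\bar{L}^+_i\varphi = r\varphi$.

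Next I would argue by contradiction, assuming $r>\lambda$. The key device is to consider, for $t\in[a_i,b_i]$, the function $w-\varepsilon\varphi$ for a suitable $\varepsilon>0$, or more directly to iterate the inequality $\lambda w\geq \bar{L}^+_iw$. Since $\bar{L}^+_i$ is a positive operator (its kernel $k_i^+(t,s)g_i(s)\geq 0$), applying $\bar{L}^+_i$ repeatedly to $\lambda w\geq \bar{L}^+_iw$ and using positivity gives $\lambda^n w\geq (\bar{L}^+_i)^n w$ for all $n$; coupled with a comparison against $\varphi$ this should be pushed to a contradiction with $r>\lambda$. The cleaner route, which is the one I expect the authors take, is the $u_0$-positivity argument: because $\bar{L}^+_i$ is $u_0$-positive (a consequence of the lower bound $k_i^+(t,s)\geq c_i\Phi_i(t)$ on $[a_i,b_i]$, exactly as exploited in Theorem~\ref{lcomp}), one has $\varphi\leq \beta w$ on $[a_i,b_i]$ for some $\beta>0$; applying $(\bar{L}^+_i)^n$ and using $\lambda^n w\geq(\bar{L}^+_i)^nw\geq \beta^{-1}(\bar{L}^+_i)^n\varphi=\beta^{-1}r^n\varphi$ yields $\lambda^n w\geq \beta^{-1}r^n\varphi$, so $(\lambda/r)^n\geq \beta^{-1}\varphi/w$ pointwise where $w>0$; letting $n\to\infty$ with $r>\lambda$ forces $\varphi\equiv 0$, a contradiction. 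Hence $r\leq\lambda$.

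The main obstacle is verifying that $\bar{L}^+_i$ genuinely satisfies the hypotheses of Theorem~$3.4$ of \cite{jw-tmna}, i.e.\ that it is $u_0$-positive on the cone of nonnegative continuous functions on $[a_i,b_i]$; this is where the structural assumptions on $k_i$ (the two-sided bound by $\Phi_i$ and the lower bound $c_i\Phi_i(t)$ on $[a_i,b_i]$) and the condition $\int_{a_i}^{b_i}\Phi_i(s)g_i(s)\,ds>0$ are used, precisely as in the proof of Theorem~\ref{lcomp}. Once that is in place, the statement is an immediate specialization of Theorem~$3.4$ of \cite{jw-tmna}. I would therefore phrase the proof as: observe that $\bar{L}^+_i$ meets the standing hypotheses of the cited theorem (referring back to the estimates in the proof of Theorem~\ref{lcomp}), and then the conclusion $r(\bar{L}^+_i)\leq\lambda$ is exactly the content of that theorem applied to the comparison function $w$.
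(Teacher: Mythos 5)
Your proposal matches the paper exactly: the paper gives no proof of this statement at all, presenting it as a special case of Theorem~$3.4$ of \cite{jw-tmna} after asserting (via the kernel bounds used in the proof of Theorem~\ref{lcomp}) that the operators $\bar{L}^+_i$ satisfy its hypotheses --- which is precisely the content of your final paragraph. Your additional direct sketch via $u_0$-positivity and iteration is sound (note only that the case $r(\bar{L}^+_i)=0$ is trivial since $\lambda>0$, so Krein--Rutman is needed only when the spectral radius is positive), but it goes beyond what the paper records.
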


\begin{thm}
\label{idx0aut1}
Assume that
\begin{enumerate}
\item[$(\mathrm{I}^{0}_{0^+})$]  there exist $\varepsilon>0$ and $\rho_0>0$ such that one of the following conditions holds:
\end{enumerate}
\begin{equation}\label{eqmu+}
f_{1}(t,u,v) \geq (\mu(L^+_1)+\varepsilon)u,\,\,\text{for }  (t,u,v)\in [a_1,b_1]\times [0,\rho_0]\times [-\rho_0,\rho_0];
\end{equation}
\begin{equation*}\label{eqmu+2}
 f_{2}(t,u,v) \geq(\mu(L^+_2)+\varepsilon)v,\,\,\text{for  }  (t,u,v)\in [a_2,b_2]\times [-\rho_0,\rho_0]\times [0,\rho_0].
\end{equation*}
Then $i_{K}(T,K_{\rho})=0$ for each $\rho\in (0,\rho_{0}]$.
\end{thm}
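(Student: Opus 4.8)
The plan is to invoke property~(1) of the fixed point index lemma stated above: I would exhibit $e\in K\setminus\{0\}$ such that $(u,v)\ne T(u,v)+\lambda e$ for every $(u,v)\in\partial K_\rho$ and every $\lambda>0$, which forces $i_K(T,K_\rho)=0$. By symmetry it is enough to treat the first alternative in $(\mathrm{I}^{0}_{0^+})$, namely \eqref{eqmu+}; for the second one simply interchanges the two components (and the two variables) throughout. I would take $e=(e_0,e_0)$ with $e_0(t)\equiv1$; since $e_0\in\tilde{K}_1\cap\tilde{K}_2$ we have $e\in K\setminus\{0\}$.

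Assume, for contradiction, that $(u,v)=T(u,v)+\lambda e$ for some $(u,v)\in\partial K_\rho$ and some $\lambda>0$. Only the first component is used: $u=T_1(u,v)+\lambda e_0$. For $t\in[a_1,b_1]$ one has $k_1(t,s)\ge c_1\Phi_1(s)\ge0$, hence $k_1(t,s)=k_1^+(t,s)$ there, and since $f_1,g_1\ge0$,
\[
u(t)\ge\int_{a_1}^{b_1}k_1^+(t,s)g_1(s)f_1(s,u(s),v(s))\,ds+\lambda .
\]
The key point is to check that the arguments of $f_1$ fall in the range prescribed by \eqref{eqmu+}: on $\partial K_\rho$ one has $\|u\|_\infty\le\rho\le\rho_0$ and $\|v\|_\infty\le\rho\le\rho_0$, while $u\in\tilde{K}_1$ gives $0\le c_1\|u\|_\infty\le u(s)\le\rho_0$ for $s\in[a_1,b_1]$, and $|v(s)|\le\rho_0$ there. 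Therefore $f_1(s,u(s),v(s))\ge(\mu(L^+_1)+\varepsilon)u(s)$ for $s\in[a_1,b_1]$, and substituting this back gives, for $t\in[a_1,b_1]$,
\[
u(t)\ge(\mu(L^+_1)+\varepsilon)\,\bar{L}^+_1\bigl(u|_{[a_1,b_1]}\bigr)(t)+\lambda .
\]

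Writing $w:=u|_{[a_1,b_1]}$, this says $w\ge0$, $w\ge\lambda>0$ on $[a_1,b_1]$ (so $w\ne0$), and $\frac{1}{\mu(L^+_1)+\varepsilon}\,w(t)\ge\bar{L}^+_1w(t)$ for $t\in[a_1,b_1]$. Theorem~\ref{thmjeff} then yields $r(\bar{L}^+_1)\le\frac{1}{\mu(L^+_1)+\varepsilon}$. On the other hand, the nonzero eigenvalues of $\bar{L}^+_1$ and $L^+_1$ coincide (in both operators the $s$-integration already ranges over $[a_1,b_1]$) and both spectral radii are attained as eigenvalues, so $r(\bar{L}^+_1)=r(L^+_1)=1/\mu(L^+_1)$; since $\mu(L^+_1)>0$ (Theorem~\ref{specrad}) and $\varepsilon>0$, this contradicts the previous inequality. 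Hence the claimed non-equation holds for all $\lambda>0$, and property~(1) of the index lemma --- together with the usual observation that a fixed point of $T$ on $\partial K_\rho$ would itself be a nontrivial solution --- gives $i_K(T,K_\rho)=0$ for every $\rho\in(0,\rho_0]$.

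The only genuinely delicate point is the range check in the second paragraph. It is precisely there that the cone membership $u\in\tilde{K}_1$ is used: it forces $u\ge0$ on $[a_1,b_1]$, which is why \eqref{eqmu+} need only be assumed on the one-sided range $[0,\rho_0]$ in the first variable, and it combines with the bounds $\|u\|_\infty,\|v\|_\infty\le\rho\le\rho_0$ available on $\partial K_\rho$. A minor secondary point is the identity $r(\bar{L}^+_1)=r(L^+_1)$, needed only to match the constant $\mu(L^+_1)$ appearing in the hypothesis with the spectral radius that Theorem~\ref{thmjeff} controls.
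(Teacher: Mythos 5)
Your treatment of the case $\lambda>0$ is correct, and it takes a genuinely different (and arguably cleaner) route from the paper's. The paper chooses $e=(\varphi_1,\varphi_2)$, the normalized eigenfunctions of the $L^+_i$, and for $\lambda>0$ runs an iteration $u(t)\ge n\lambda\varphi_1(t)$ that contradicts $\|u\|_\infty\le\rho$; you instead take $e=(1,1)$ and feed the inequality $u\ge(\mu(L^+_1)+\varepsilon)\,\bar{L}^+_1u+\lambda$ on $[a_1,b_1]$ directly into Theorem~\ref{thmjeff}, using $u\ge\lambda>0$ there to certify that the test function is nontrivial. Your range check is right, and the spectral comparison you need is only the inequality $r(\bar{L}^+_1)\ge r(L^+_1)$, which follows (as in the paper) by restricting $\varphi_1$ to $[a_1,b_1]$; the full equality $r(\bar{L}^+_1)=r(L^+_1)$ that you assert is more than necessary, and its converse half quietly uses that $r(\bar{L}^+_1)$ is itself attained as an eigenvalue, which is not needed.

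The genuine gap is the case $\lambda=0$. Property (1) of the index lemma is stated under the standing hypothesis that $x\ne Fx$ on $\partial\Omega_K$, and the conclusion of the theorem is that the index \emph{equals} $0$ --- which presupposes that the index is defined, i.e.\ that $T$ has no fixed point on $\partial K_\rho$. The remark that ``a fixed point on the boundary would itself be a nontrivial solution'' rescues the downstream existence results but does not prove the stated index identity. Moreover, your own mechanism degenerates exactly at $\lambda=0$: the lower bound $w\ge\lambda>0$ disappears, so you can no longer guarantee $w\ne 0$ before invoking Theorem~\ref{thmjeff}. The paper supplies a separate argument here: taking $u$ to be a component with $\|u\|_\infty=\rho$, cone membership gives $u\ge c_1\rho>0$ on $[a_1,b_1]$, and the same comparison $u\ge(\mu(L^+_1)+\varepsilon)\bar{L}^+_1u$ together with Theorem~\ref{thmjeff} and $r(\bar{L}^+_1)\ge r(L^+_1)$ yields $\mu(L^+_1)+\varepsilon\le\mu(L^+_1)$, a contradiction. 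You need to add this step (or an equivalent exclusion of boundary fixed points) for the proof to establish the theorem as stated.
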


\begin{proof}
 Let $\rho\in(0,\rho_0]$. We show that
$(u,v)\ne T(u,v)+\lambda(\varphi_1,\varphi_2)$ for all $(u,v)$ in $\partial K_\rho$ and $\lambda\geq 0$,
where $\varphi_i\in \tilde{K}_i\cap P$ is the eigenfunction of $L^+_i$ with $\|\varphi_i\|_{\infty}=1$ corresponding to the eigenvalue $1/\mu(L^+_i)$. This implies that $ i_{K}(T,K_{\rho})=0$.\\
Assume, on the contrary, that there exist $(u,v)\in\partial K_\rho$ and $\lambda\geq0$ such that $(u,v)=T(u,v)+\lambda(\varphi_1,\varphi_2)$.\\ We distinguish two cases. Firstly we discuss the case $\lambda>0$. Suppose that \eqref{eqmu+} holds.
This implies that, for $t\in [a_1,b_1],$ we have
\begin{align*}
 u(t)=& \int_0^1k_1(t,s)g_1(s)f_1(s,u(s),v(s))ds +\lambda \varphi_1(t)\\ \geq & \int_{a_{1}}^{b_{1}} k^+_{1}(t,s)g_1(s)f_1(s,u(s),v(s))ds+\lambda \varphi_1(t) \\ \geq &  (\mu( L^+_1)+\varepsilon) \int_{a_{1}}^{b_{1}} k^+_1(t,s)g_1(s)u(s)ds+\lambda \varphi_1(t)\\ 
 >&\mu( L^+_1) \int_{a_{1}}^{b_{1}} k^+_1(t,s)g_1(s)u(s)ds+\lambda \varphi_1(t) \\=&\mu( L^+_1)  L^+_1u(t)+\lambda\varphi_1(t).
 \end{align*}
Moreover, we have  $u(t)\ge\lambda\varphi_1(t)$  and then $ L^+_1u(t)\ge\lambda L^+_1\varphi_1(t)\ge \dfrac{\lambda}{\mu( L^+_1)}\varphi_1(t)$ in such a way that we obtain 
$$
u(t)\ge\mu( L^+_1) L^+_1u(t)+\lambda\varphi_1(t)\ge2\lambda\varphi_1(t),\ \text{   for   } t\in[a_1,b_1].
$$
By iteration, we deduce that, for $ t\in[a_1,b_1]$, we get
$$
u(t)\ge n\lambda\varphi_1(t)  \text{   for every  } n\in\mathbb {N},
$$
 a contradiction because $\|u\|_{\infty}\leq\rho$.\\
  Now we consider the case $\lambda=0$.  We have, for $t\in [a_1,b_1]$, 
\begin{align*}
 u(t)=&\int_0^1k_1(t,s)g_1(s)f_1(s,u(s),v(s))ds\\
  \geq&\int_{a_1}^{b_1} k^+_1(t,s)g_1(s)f_1(s,u(s),v(s))ds \geq(\mu( L^+_1)+\varepsilon)  L^+_1u(t).
\end{align*}
Since $ L^+_1\varphi_1(t)=r( L^+_1)\varphi_1(t)$ for $t\in[0,1]$, we have, for $t\in[a_1,b_1]$,
$$
\bar{L}^+_1 \varphi_1(t)= L^+_1\varphi_1(t)=r(L^+_1)\varphi_1(t),
$$
and we obtain $r(\bar{L}^+_1)\geq r(L^+_1)$. On the other hand, we have, for $t\in [a_1,b_1]$, 
\begin{equation*}
 u(t)\geq(\mu(L^+_1)+\varepsilon)  L^+_1u(t)=(\mu(L^+_1)+\varepsilon) \bar{L}^+_1 u(t).
\end{equation*}
where $u(t)>0$. Thus, utilizing Theorem~\ref{thmjeff}, we have  $r(\bar{L}^+_1)\leq \dfrac{1}{\mu(L^+_1)+\varepsilon}$
and therefore $r({L}^+_1)\leq \dfrac{1}{\mu( L^+_1)+\varepsilon}$
and thus $\mu( L^+_1)+\varepsilon\leq \mu(L^+_1)$, a contradiction. \par
 \end{proof}
 
 \begin{rem}\label{idx0aut2}
 Note that condition \eqref{eqmu+} holds, for example, if 
 $$
 \mu( L^+_1)<\liminf_ {u\to 0^+} \inf\limits_{t \in [a_1,b_1]} \frac{f_1(t,u,v)}{u},\  \text{uniformly w.r.t.}\, v\in \mathbb{R}.
 $$
A similar type of condition has been used in \cite{chzh1}.
\end{rem} 
 
  \begin{thm} \label{idx0aut3}
Assume that
\begin{enumerate}
\item[$(\mathrm{I}^{0}_{\infty})$]  there exists $R_1>0$ such that  the following conditions hold:
\end{enumerate}
\begin{equation}\label{eqmu+in}
f_{1}(t,u,v) \geq (\mu(L^+_1)+\varepsilon)u,\,\,\text{for }  (t,u,v)\in [a_1,b_1]\times [cR_1,+\infty)\times \mathbb{R};
\end{equation}
\begin{equation*}\label{eqmu+2in}
 f_{2}(t,u,v) \geq (\mu(L^+_2)+\varepsilon)v,\,\,\text{for }  (t,u,v)\in [a_2,b_2]\times \mathbb{R}\times [cR_1,+\infty).
\end{equation*}
Then $i_{K}(T,K_{R})=0$ for each $R\geq R_1$.
\end{thm}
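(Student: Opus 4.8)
The plan is to follow closely the argument used for Theorem~\ref{idx0aut1}, the only real difference being that the a priori bound now comes from the \emph{lower} estimate enjoyed by functions on $\partial K_R$ rather than from an upper bound on a small ball. Assuming by symmetry that \eqref{eqmu+in} holds, I would fix $R\ge R_1$ and take $\varphi_i\in\tilde{K}_i\cap P$ to be the eigenfunction of $L^+_i$, normalized by $\|\varphi_i\|_\infty=1$, corresponding to the eigenvalue $r(L^+_i)=1/\mu(L^+_i)$; its existence and location are supplied by Theorems~\ref{lcomp} and~\ref{specrad}. The goal is to show that $(u,v)\neq T(u,v)+\lambda(\varphi_1,\varphi_2)$ for every $(u,v)\in\partial K_R$ and every $\lambda\ge0$. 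The instance $\lambda=0$ gives in particular $(u,v)\neq T(u,v)$ on $\partial K_R$, so $i_K(T,K_R)$ is well defined (recall $T$ is a compact self-map of $K$ by Lemma~\ref{compact}), and then property $(1)$ of the fixed point index lemma, applied with $e=(\varphi_1,\varphi_2)\in K\setminus\{0\}$, forces $i_K(T,K_R)=0$.

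To get a contradiction, suppose $(u,v)=T(u,v)+\lambda(\varphi_1,\varphi_2)$ for some $(u,v)\in\partial K_R$ and $\lambda\ge0$. Since $(u,v)\in\partial K_R$, either $\|u\|_\infty=R$ or $\|v\|_\infty=R$; I would treat the first case, the second being identical with the roles of the components exchanged and with the condition on $f_2$ and $\varphi_2$ replacing those on $f_1$ and $\varphi_1$. The crucial observation is that $u\in\tilde{K}_1$ together with $R\ge R_1$ forces $u(s)\ge c_1 R\ge cR_1$ for $s\in[a_1,b_1]$, so that \eqref{eqmu+in} is applicable on that subinterval regardless of the values taken by $v$. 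Using that $k_1(t,s)=k^+_1(t,s)\ge0$ for $t\in[a_1,b_1]$ (which lets one discard the part of the integral over $[0,1]\setminus[a_1,b_1]$), I would derive, for $t\in[a_1,b_1]$,
\[
u(t)\geq\int_{a_1}^{b_1}k^+_1(t,s)g_1(s)f_1(s,u(s),v(s))\,ds+\lambda\varphi_1(t)\geq(\mu(L^+_1)+\varepsilon)\,L^+_1u(t)+\lambda\varphi_1(t).
\]

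From here the argument splits into two cases exactly as in the proof of Theorem~\ref{idx0aut1}. If $\lambda>0$: since $T_1(u,v)\ge0$ on $[a_1,b_1]$ we have $u\ge\lambda\varphi_1$ there, hence $L^+_1u\ge\lambda L^+_1\varphi_1=(\lambda/\mu(L^+_1))\varphi_1$ on $[0,1]$; reinserting this into the displayed inequality and iterating gives $u(t)\ge n\lambda\varphi_1(t)$ on $[a_1,b_1]$ for every $n\in\mathbb{N}$, which is impossible because $\|u\|_\infty\le R$ while $\varphi_1\ge c_1>0$ on $[a_1,b_1]$. If $\lambda=0$: the displayed inequality becomes $u(t)\ge(\mu(L^+_1)+\varepsilon)\,\bar{L}^+_1u(t)$ for $t\in[a_1,b_1]$, with $u>0$ on $[a_1,b_1]$, so Theorem~\ref{thmjeff} gives $r(\bar{L}^+_1)\le 1/(\mu(L^+_1)+\varepsilon)$; on the other hand $\varphi_1|_{[a_1,b_1]}$ is a nonzero nonnegative eigenfunction of $\bar{L}^+_1$ with eigenvalue $r(L^+_1)$, so $r(\bar{L}^+_1)\ge r(L^+_1)=1/\mu(L^+_1)$, and the two inequalities together give $\mu(L^+_1)+\varepsilon\le\mu(L^+_1)$, a contradiction.

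The only step that calls for genuine care is the bookkeeping imposed by the sign change of $k_1$: one must be sure at each stage that $L^+_1u$ and $\bar{L}^+_1u$ involve only the values of $u$ on $[a_1,b_1]$ — where $k_1\ge0$ and \eqref{eqmu+in} is in force — and that $u\ge\lambda\varphi_1$ really holds on that subinterval. Both points rest on the fact that $k_1(s,\cdot)\ge c_1\Phi_1(\cdot)\ge0$ for $s\in[a_1,b_1]$, which makes $T_1(u,v)$ nonnegative there. All the remaining computations are a verbatim repetition of those in the proof of Theorem~\ref{idx0aut1}, so in the write-up I would simply point back to that proof for the iteration in the case $\lambda>0$ and for the $\bar{L}^+_1$ comparison in the case $\lambda=0$.
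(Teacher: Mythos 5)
Your proposal is correct and follows essentially the same route as the paper: the same test element $(\varphi_1,\varphi_2)$, the same use of $u\ge c_1R\ge cR_1$ on $[a_1,b_1]$ to activate the growth condition, the iteration $u\ge n\lambda\varphi_1$ for $\lambda>0$, and the $\bar L^+_1$ comparison via Theorem~\ref{thmjeff} for $\lambda=0$. The only difference is that you spell out the steps the paper delegates to the proof of Theorem~\ref{idx0aut1}, which is harmless.
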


\begin{proof}
 Let $R\geq R_1$.  We show that
$(u,v)\ne T(u,v)+\lambda(\varphi_1,\varphi_2)$ for all $(u,v)$ in $\partial K_R$ and $\lambda\geq 0$,
where $\varphi_i\in \tilde{K}_i\cap P$ is the eigenfunction of $L^+_i$ with $\|\varphi_i\|_{\infty}=1$ corresponding to the eigenvalue $1/\mu(L^+_i)$. This implies that $ i_{K}(T,K_{R})=0$.\\
Assume, on the contrary, that there exist $(u,v)\in\partial K_R$ and $\lambda\geq0$ such that $(u,v)=T(u,v)+\lambda(\varphi_1,\varphi_2)$.\\ Suppose that $\|u\|_{\infty}=R$ and $\|v\|_{\infty}\leq R$. 
 We have $u(t)\ge c\|u\|_{\infty}=c R\ge c R_1$ for $t\in[a_1,b_1]$, thus condition \eqref{eqmu+in} holds. Hence, we have $f(t,u(t),v(t))\geq(\mu( L^+_1)+\varepsilon)u(t)$ for $t\in[a_1,b_1]$. This implies, proceeding as in the proof of Theorem \ref{idx0aut1} for the case $\lambda>0$, that for $t\in [a_1,b_1]$
$$
u(t)\ge\mu(L^+_1)  L^+_1u(t)+\lambda\varphi_1(t)\ge2\lambda\varphi_1(t).
$$
Then $u(t)\ge n\lambda\varphi_1(t)$ for every $n\in\mathbb {N}$, a contradiction because $\|u\|_{\infty}=R$.\\
 The proof in the case $\lambda=0$ is treated as in the proof of Theorem \ref{idx0aut1}.
 \end{proof}

\begin{thm}
\label{idx1aut1}
Assume that
\begin{enumerate}
\item[$(\mathrm{I}^{1}_{0^+})$]  there exist $\varepsilon>0$ and $\rho_0>0$ such that  the following conditions hold:
\end{enumerate}
\begin{equation*}\label{eq1mu+}
f_{1}(t,u,v) \leq (\mu(L_1)-\varepsilon)|u|,\,\,\text{for all }(t,u,v)\in [0,1]\times [-\rho_0,\rho_0] \times[-\rho_0,\rho_0] ;
\end{equation*}
\begin{equation*}\label{eq1mu+2}
 f_{2}(t,u,v) \leq (\mu(L_2)-\varepsilon)|v|,\,\,\text{for all }(t,u,v)\in [0,1]\times [-\rho_0,\rho_0] \times[-\rho_0,\rho_0] .
\end{equation*}
Then $i_{K}(T,K_{\rho})=1$ for each $\rho\in (0,\rho_0]$.
\end{thm}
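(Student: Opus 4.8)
The plan is to show that the homotopy condition $\mu(u,v)\ne T(u,v)$ holds for every $(u,v)\in\partial K_\rho$ and every $\mu\ge 1$; by part (2) of the fixed point index Lemma this yields $i_K(T,K_\rho)=1$. So I would argue by contradiction, assuming there exist $\mu\ge 1$ and $(u,v)\in\partial K_\rho$ with $\mu(u,v)=T(u,v)$. Without loss of generality suppose $\|u\|_\infty=\rho$ (the case $\|v\|_\infty=\rho$ is symmetric), so that $|u(t)|\le\rho$ and $|v(t)|\le\rho$ for all $t$, which puts $(t,u(t),v(t))$ in the range $[0,1]\times[-\rho_0,\rho_0]\times[-\rho_0,\rho_0]$ where the growth bound on $f_1$ is available since $\rho\le\rho_0$.

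Next I would estimate $\mu|u(t)|$ pointwise. From $\mu u(t)=\int_0^1 k_1(t,s)g_1(s)f_1(s,u(s),v(s))\,ds$, taking absolute values and using the growth assumption $f_1(t,u,v)\le(\mu(L_1)-\varepsilon)|u|$, we get
\begin{equation*}
\mu|u(t)|\le\int_0^1|k_1(t,s)|g_1(s)f_1(s,u(s),v(s))\,ds\le(\mu(L_1)-\varepsilon)\int_0^1|k_1(t,s)|g_1(s)|u(s)|\,ds=(\mu(L_1)-\varepsilon)(L_1|u|)(t).
\end{equation*}
Hence $\mu|u|\le(\mu(L_1)-\varepsilon)L_1|u|$ pointwise, where $|u|\in P$ and $|u|\ne 0$. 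Iterating this inequality (using that $L_1$ is a positive operator, so it preserves the order) gives $\mu^n|u|\le(\mu(L_1)-\varepsilon)^n L_1^n|u|$ for all $n$, and since $\mu\ge 1$ we obtain $|u|\le(\mu(L_1)-\varepsilon)^n L_1^n|u|$, i.e. $\|u\|_\infty\le(\mu(L_1)-\varepsilon)^n\|L_1^n\|\,\|u\|_\infty$.

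The main obstacle — and the place where Theorem~\ref{specrad} is needed — is converting the spectral estimate into a contradiction. The clean way is: since $\mu(L_1)=1/r(L_1)$, we have $(\mu(L_1)-\varepsilon)r(L_1)<1$, and because $r(L_1)=\lim_n\|L_1^n\|^{1/n}$, for $n$ large enough $(\mu(L_1)-\varepsilon)\|L_1^n\|^{1/n}<1$, so $(\mu(L_1)-\varepsilon)^n\|L_1^n\|\to 0$; plugging this into $\|u\|_\infty\le(\mu(L_1)-\varepsilon)^n\|L_1^n\|\,\|u\|_\infty$ forces $\|u\|_\infty=0$, contradicting $\|u\|_\infty=\rho>0$. (One must check $r(L_1)>0$ so that $\mu(L_1)$ is finite and $\varepsilon$ can be chosen with $\mu(L_1)-\varepsilon$ still positive — this is exactly the content of Theorem~\ref{specrad}, and if $\mu(L_1)-\varepsilon\le 0$ the hypothesis forces $f_1\equiv 0$ on the relevant set and the estimate $\mu|u|\le 0$ gives the contradiction immediately.) Alternatively, one could use Theorem~\ref{thmjeff}-style comparison directly with the eigenfunction of $L_1$, but the iteration argument via the spectral radius formula is the most transparent and parallels the structure already used in the proof of Theorem~\ref{idx0aut1}.
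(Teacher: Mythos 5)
Your proposal is correct and follows essentially the same route as the paper: the homotopy $\mu(u,v)\neq T(u,v)$ for $\mu\ge 1$, the pointwise bound $\mu|u|\le(\mu(L_1)-\varepsilon)L_1|u|$, iteration using positivity of $L_1$, and the spectral radius formula $r(L_1)=\lim_n\|L_1^n\|^{1/n}$ to force $\|u\|_\infty\le(\mu(L_1)-\varepsilon)^n\|L_1^n\|\,\|u\|_\infty$ into a contradiction. Your explicit WLOG on which component attains the norm $\rho$ and your remark on $r(L_1)>0$ are minor tidy-ups of details the paper leaves implicit, not a different argument.
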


\begin{proof}
 Let $\rho\in (0,\rho_0]$.  We prove that $T(u,v)\ne\lambda (u,v)$ for $(u,v)\in\partial K_\rho$ and $\lambda\ge 1$, which implies $ i_{K}(T,K_{\rho})=1$. In fact, if we assume otherwise, then there exists $(u,v)\in\partial K_\rho$ and $\lambda\ge1$ such that $\lambda (u,v)=T(u,v)$. Therefore,
\begin{align*}
 |u(t)|\leq&\lambda |u(t)|=  |T_1(u,v)(t)|  =  \left|\int_0^1k_1(t,s)g_1(s)f_1(s,u(s),v(s))ds\right|\\ \le &\int_0^1|k_1(t,s)|g_1(s)f_1(s,u(s),v(s))ds \le (\mu(L_1)-\varepsilon)\int_0^1|k_1(t,s)|g_1(s)|u(s)|ds\\ = &(\mu(L_1)-\varepsilon)L_1 |u|(t).
 \end{align*}
Thus, we have that, for $ t\in [0,1]$,
\begin{align*}
|u(t)|\le & (\mu(L_1)-\varepsilon)L_1[(\mu(L_1)-\varepsilon)L_1|u|(t)]\\ =&(\mu(L_1)-\varepsilon)^2L_1^2|u|(t)\le\cdots\le(\mu(L_1)-\varepsilon)^nL_1^n|u|(t),
 \end{align*}
thus, taking the norms, $1\le(\mu(L_1)-\varepsilon)^n\|L_1^n\|$, and then
$$1\le(\mu(L_1)-\varepsilon)\lim_{n\to\infty}\|L_1^n\|^\frac{1}{n}=\frac{\mu(L_1)-\varepsilon}{\mu(L_1)}<1,$$
a contradiction.
\end{proof}
\begin{thm}
\label{idx1aut2}
Assume that
\begin{enumerate}
\item[$(\mathrm{I}^{1}_{\infty})$]  there exist $\varepsilon>0$ and $R_1>0$ such that  the following conditions hold:
\end{enumerate}
\begin{equation*}\label{eq1muin}
f_{1}(t,u,v) \le (\mu(L_1)-\varepsilon)|u|,\,\text{for } |u|\geq R_1, |v|\geq R_1,\,\, \text{and a.e.  } t\in [0,1];
\end{equation*}
\begin{equation*}\label{eq1muin2}
 f_{2}(t,u,v) \le  (\mu(L_2)-\varepsilon)|v|,\,\,\text{for  } |u|\geq R_1, |v|\geq R_1,\,\, \text{and a.e.  } t\in [0,1].
\end{equation*}
 Then there exists $R_{0}$ such that $i_{K}(T,K_{R})=1$  for each  $R > R_{0}$.
\end{thm}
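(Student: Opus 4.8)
The plan is to use property~(2) of the fixed point index lemma recalled above: I will produce $R_0>0$ such that, for every $R>R_0$, one has $\mu(u,v)\neq T(u,v)$ on $\partial K_R$ for all $\mu\geq1$, which forces $i_K(T,K_R)=1$. Suppose, to the contrary, that $\mu(u,v)=T(u,v)$ for some $\mu\geq1$ and some $(u,v)\in\partial K_R$; without loss of generality $\|u\|_\infty=R$ and $\|v\|_\infty\leq R$, and we may assume $0<\varepsilon<\mu(L_1)$. Since $\mu\geq1$, for every $t\in[0,1]$
\[
|u(t)|\leq|T_1(u,v)(t)|\leq\int_0^1|k_1(t,s)|g_1(s)f_1(s,u(s),v(s))\,ds .
\]

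The first step is to turn the asymptotic hypothesis $(\mathrm{I}^1_\infty)$ into a global bound of linear-plus-constant type. Using the Carath\'{e}odory condition with $r=R_1$ and distinguishing, for a.e.\ $s$, whether $(u(s),v(s))$ lies in $[-R_1,R_1]^2$ or outside, I would produce a function $\psi\in L^\infty[0,1]$, $\psi\geq0$, depending only on $\varepsilon$ and $R_1$ and \emph{not} on $R$, with
\[
f_1(s,u(s),v(s))\leq(\mu(L_1)-\varepsilon)|u(s)|+\psi(s)\quad\text{for a.e. }s\in[0,1].
\]
Substituting this into the estimate above and recalling the definition of $L_1$ yields, on all of $[0,1]$, the pointwise inequality $|u(t)|\leq(\mu(L_1)-\varepsilon)L_1|u|(t)+L_1\psi(t)$.

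The core of the argument is then an iteration, exactly as in the proof of Theorem~\ref{idx1aut1}. Since $L_1$ is positive, order preserving and compact on $C[0,1]$ (Theorem~\ref{lcomp}) and $\mu(L_1)-\varepsilon\geq0$, feeding the inequality into itself $n$ times gives
\[
|u|\leq(\mu(L_1)-\varepsilon)^n L_1^n|u|+\sum_{k=0}^{n-1}(\mu(L_1)-\varepsilon)^k L_1^{k+1}\psi .
\]
Because $r\big((\mu(L_1)-\varepsilon)L_1\big)=(\mu(L_1)-\varepsilon)/\mu(L_1)<1$, the spectral radius formula gives $(\mu(L_1)-\varepsilon)^n\|L_1^n\|\to0$, so the first term tends to $0$ in $C[0,1]$ and the Neumann-type series $w_1^*:=\sum_{k\geq0}(\mu(L_1)-\varepsilon)^k L_1^{k+1}\psi$ converges in $C[0,1]$ to a function that, like $\psi$, is independent of $R$. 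Letting $n\to\infty$ we get $|u(t)|\leq w_1^*(t)$ for every $t$, hence $R=\|u\|_\infty\leq\|w_1^*\|_\infty$. Running the symmetric argument for the second component produces an $R$-independent function $w_2^*$; taking $R_0:=\max\{\|w_1^*\|_\infty,\|w_2^*\|_\infty\}$ gives a contradiction whenever $R>R_0$, and the theorem follows.

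The step I expect to be the main obstacle is the first one: extracting the $R$-independent bound $f_1(s,u(s),v(s))\leq(\mu(L_1)-\varepsilon)|u(s)|+\psi(s)$ from a hypothesis that only controls $f_1$ where \emph{both} arguments are at least $R_1$. The subsets of $[0,1]$ on which exactly one of $|u(s)|,|v(s)|$ is small must be absorbed into $\psi$ through the Carath\'{e}odory bounds, and it is precisely the independence of $\psi$ — hence of $w_1^*,w_2^*$ — from $R$ that makes the final comparison $R\leq R_0$ meaningful; this is the point that requires genuine care, whereas monotonicity of the $L_i$, convergence of the geometric iteration, and the identity $r(L_i)=1/\mu(L_i)$ (Theorem~\ref{specrad}) are routine.
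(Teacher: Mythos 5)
Your proposal is correct and follows essentially the same route as the paper: the paper likewise converts the hypothesis into the global bound $f_1(t,u,v)\le(\mu(L_1)-\varepsilon)|u|+\phi_{1,R_1}(t)$ via the Carath\'eodory majorant at level $R_1$, and then applies the positive bounded inverse $(\Id-(\mu(L_1)-\varepsilon)L_1)^{-1}=\sum_k((\mu(L_1)-\varepsilon)L_1)^k$ to the resulting inequality $(\Id-(\mu(L_1)-\varepsilon)L_1)|u|\le C_1$, which is exactly your $n$-fold iteration written as a Neumann series, with $R_0$ defined the same way. The one step you flag as delicate --- absorbing the region where only one of $|u(s)|,|v(s)|$ is below $R_1$ into the $R$-independent majorant --- is not treated any more carefully in the paper, which simply asserts the global linear-plus-constant bound, so you are not missing an argument that the paper supplies.
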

\begin{proof}

Since the functions $f_i$ satisfy Carath\'{e}odory condition, there exists  $\phi_{i,R_1} \in L^{\infty}[0,1]$ such that{}
\begin{equation*}
f_i(t,u,v)\le \phi_{i,R_1}(t) \;\text{ for } \; u,v\in [-R_1,R_1]\;\text{ and
a.\,e.} \; t\in [0,1].
\end{equation*}
 Hence, we have
\begin{equation}\label{supest}
f_1(t,u,v)\le(\mu(L_1)-\varepsilon)|u| +\phi_{1,R_1}(t)\ \text{for all}\  u,v\in \mathbb {R}\, \text{and a.e. }\ t\in [0,1],
\end{equation}
and
\begin{equation*}\label{supest1}
f_2(t,u,v)\le(\mu(L_2)-\varepsilon)|v| +\phi_{2,R_1}(t)\ \text{for all}\  u,v\in \mathbb {R}\ \text{and a. e. }\ t\in [0,1].
\end{equation*}
Denote by $\Id$  the identity operator. Since for $i=1,2$ the operators $(\mu(L_i)-\varepsilon) L_i$ have spectral radius less than one, we have that  the operators $(\Id-(\mu(L_i)-\varepsilon)L_i)^{-1}$ exist and are bounded. Moreover, from the Neumann series expression, 
$$
(\Id-(\mu(L_i)-\varepsilon) L_i)^{-1}=\sum_{k=0}^\infty((\mu(L_i)-\varepsilon) L_i)^k
$$
we obtain that  $(\Id-(\mu(L_i)-\varepsilon) L_i)^{-1}$ map $P$ into $P$, since the operators $L_i$ have this property.

Take for $i=1,2$ 
$$
C_i:=\int_{0}^{1}\Phi_i(s)g_i(s)\phi_{i,R_1}(s)ds,
$$
and
$$
 R_0:=\max\{\|(\Id-(\mu(L_i)-\varepsilon) L_i)^{-1} C_i \|_{\infty},\,\,i=1,2 \} \in\mathbb {R}.
$$
 Now we prove that for each $R>R_0$, $T(u,v)\ne\lambda (u,v)$ for all $(u,v)\in\partial K_R$ and $\lambda\ge 1$, which implies $ i_{K}(T,K_{R})=1$. Otherwise there exist $(u,v)\in\partial K_R$ and $\lambda\ge 1$ such that $\lambda (u,v)=T(u,v)$. Suppose that $\|u\|_{\infty}=R$ and $\|v\|_{\infty}\leq R$. \\
 From the  inequality \eqref{supest}, we have, for $t\in [0,1]$,
\begin{align*}
|u(t)|\leq\lambda |u(t)|=  |T_1(u,v)(t)|  = & \left|\int_0^1k_1(t,s)g_1(s)f_1(s,u(s),v(s))ds\right|\\
\le &\int_0^1|k_1(t,s)|g_1(s)f_1(s,u(s),v(s))ds\\  
\le & (\mu(L_1)-\varepsilon)\int_0^1|k_1(t,s)|g_1(s)|u(s)|ds+C_i\\ = &(\mu(L_i)-\varepsilon)L_1 |u|(t)+C_1,
\end{align*}
which implies 
$$
 (\Id-(\mu(L_1)-\varepsilon) L_1)|u|(t)\le C_1.
 $$
 Since $(\Id-(\mu(L_1)-\varepsilon) L)^{-1}$ is non-negative, we have
$$
|u(t)|\le (\Id-(\mu(L_1)-\varepsilon) L_1)^{-1} C_1\leq R_0.
$$
Therefore, we have $\|u\|_{\infty}\le R_0<R$, a contradiction.
\end{proof}

The index results in Sections \ref{sec2} and \ref{seceigen} can be combined  in order to establish results on existence of multiple nontrivial solutions for the system \eqref{syst}, we refer to \cite{lan-lin-na} for similar statements.
\section{An auxiliary system of ODEs}\label{odes}
We now present some results regarding the following system of ODEs 
\begin{gather}
\begin{aligned}\label{1syst}
u''(t) + g_1(t) f_1(t,u(t),v(t)) = 0, \quad \text{a.e. on } [0,1], \\
v''(t) + g_2(t) f_2(t,u(t),v(t)) = 0, \quad \text{a.e. on } [0,1],%
\end{aligned}
\end{gather}
with the BCs
\begin{gather}
\begin{aligned}\label{1BC}
u'(0)=0,\ {\alpha}_1u({\eta})=u(1), \; 0 < {\eta}<1, \\
v'(0)=0,\ v(1)=\alpha_2 v'(\xi), \; 0 < {\xi}<1.
\end{aligned}
\end{gather}
Here we focus on the case $\alpha_1<0$, $0<\alpha_2<1-\xi$, that leads to the case of solutions that are \emph{positive} on some sub-intervals of $ [0,1]$ and are allowed to change sign elsewhere.

To the system \eqref{1syst}-\eqref{1BC} we associate the system of Hammerstein integral equations
\begin{gather}
\begin{aligned}\label{syst2}
u(t)=\int_{0}^{1}k_1(t,s)g_1(s)f_1(s,u(s),v(s))\,ds, \\
v(t)= \int_{0}^{1}k_2(t,s)g_2(s)f_2(s,u(s),v(s))\,ds,%
\end{aligned}
\end{gather}
where the Green's functions are given by
\begin{equation} \label{ker1}
k_1(t,s)=\dfrac{1}{1-\alpha_1}(1-s)-\begin{cases}
\dfrac{\alpha_1}{1-\alpha_1}(\eta -s), &  s \le \eta\\ \quad 0,&
s>\eta
\end{cases}
 - \begin{cases} t-s, &s\le t, \\ \quad 0,&s>t,
\end{cases}
\end{equation}
and 
\begin{equation}\label{ker2}
k_2(t,s)=(1-s)-\begin{cases} \alpha_2, &  s \le \xi\\ \quad 0,& s>\xi
\end{cases}
 - \begin{cases} t-s, &s\le t, \\ \quad 0,&s>t.
\end{cases}
\end{equation}
The Green's function $k_1$ has been studied in \cite{gijwjiea}, where it was shown that we may take 
$$\Phi_1(s)= 1-s,$$ arbitrary $[a_1,b_1]\subset[0,\eta]$ and $c_1={(1-\eta)}/{(1-\alpha_1)}$. 

Regarding $k_2$, this has been studied in \cite{giems}; we may take 
$$
\Phi_2(s)=1-s, 
$$ 
arbitrary $[a_2,b_2]\subset [0,\xi]$ and $c_2=1-\alpha_2-\xi$.

The results of the previous Sections, for example Theorem \ref{mult-sys}, can be applied to the system~\eqref{syst2}.
\subsection{Optimal intervals}
We now assume that $g_1 =g_2\equiv 1$ and we seek the  `optimal' $[a_i,b_i]$ such that 
$$M_i(a_i,b_i)=\Bigl(\inf_{t\in
[a_i,b_i]}\int_{a_i}^{b_i} k_i(t,s) \,ds\Bigl)^{-1}$$ is a minimum. 
This type of problem has been tackled in the past in the case of second and higher order BVPs in \cite{Cab1, gipp-cant, gi-pp, paola, jwpomona, jwwcna04, jwgi-lms-II}.

Since in  $[0,1]\times  [0,1]$ the kernel $k_1$ is non-positive only for $$\dfrac{1-\alpha_1 \eta}{1-\alpha_1}\leq t\leq1\ \text{and}\ 0\leq s \leq \dfrac{1-\alpha_1}{-\alpha_1}t+\dfrac{1}{\alpha_1},$$ by direct calculation, we have
\begin{equation*}
\int_0^1 |k_1(t,s)|\,ds =\begin{cases} -t^2/2+\dfrac{1}{1-\alpha_1}( \dfrac{\eta^2}{2}-\alpha_1\eta^2+\dfrac{1}{2})-\eta^2/2=:\vartheta_1(t), &  0 \le t\leq \dfrac{1-\alpha_1 \eta}{1-\alpha_1},\\ 
 \dfrac{-\alpha_1+2}{-2\alpha_1}t^2+\dfrac{2}{\alpha_1} t+\dfrac{-\alpha_1-\alpha_1^2\eta^2+2}{-2\alpha_1(1-\alpha_1)}=:\vartheta_2(t), & \dfrac{1-\alpha_1 \eta}{1-\alpha_1}\leq t\leq 1,
\end{cases}
 \end{equation*}
 and therefore we obtain
 \begin{align*}
1/m_1=& \sup_{t \in [0,1]} \int_0^1 |k_1(t,s)|\,ds \\ 
=&\begin{cases} \dfrac{1}{1-\alpha_1}( \dfrac{\eta^2}{2}-\alpha_1\eta^2+\dfrac{1}{2})-\eta^2/2=\vartheta_1(0), &  \text{if}-2\alpha_1\eta^2+\alpha_1+1\geq 0,\\ 
 \dfrac{-\alpha_1+2}{-2\alpha_1}+\dfrac{2}{\alpha_1} +\dfrac{-\alpha_1-\alpha_1^2\eta^2+2}{-2\alpha_1(1-\alpha_1)}=\vartheta_2(1),& \text{if} -2\alpha_1\eta^2+\alpha_1+1\leq 0.
\end{cases}
\end{align*}
Firstly we note that $\dfrac{1-\alpha_1\eta}{1-\alpha_1}\geq \eta$. For arbitrary $0 \leq a< b\leq \eta$, the kernel $k_1$ is a positive, non-increasing function of $t$.
Thus we have
$$
1/M_1(a,b)=\min_{t\in[a,b]} \int_{a}^{b} k_1(t,s)\,ds=\int_a^b
k_1(b,s)\,ds.
$$
Note that $\inf_{0\leq a <b}M_1(a,b)=M_1(0,b)$ and we get
$$
1/M_1(0,b)=\int_{0}^{b}k_1(b,s)\,ds=\Bigl(\frac{1-\alpha_1\eta}{1-\alpha_1}-b\Bigr)b
$$
Now we have
$$
\max_{0< b \leq \eta} \Bigl\{\Bigl(\frac{1-\alpha_1\eta}{1-\alpha_1}-b\Bigr)b\Bigr\}=\begin{cases}
\dfrac{(1-\alpha_1\eta)^2}{4(1-\alpha_1)^2},& \text{ if }  \dfrac{1-\alpha_1\eta}{2(1-\alpha_1)}<\eta,\\
\dfrac{\eta(1-\eta)}{1-\alpha_1},&  \text{ if }  \dfrac{1-\alpha_1\eta}{2(1-\alpha_1)}\geq \eta.
\end{cases}
$$

Therefore we may take as optimal interval
$$
[a_1,b_1]=\begin{cases}
[0, \frac{1-\alpha_1\eta}{2(1-\alpha_1)}],& \text{ if }  \frac{1-\alpha_1\eta}{2(1-\alpha_1)}<\eta,\\
[0, \eta],&  \text{ if }  \frac{1-\alpha_1\eta}{2(1-\alpha_1)}\geq \eta.
\end{cases}
$$

The kernel $k_2$ in  $[0,1]\times  [0,1$] is non-positive only for $$1-\alpha_2\leq t\leq1\ \text{and}\ 0\leq s \leq \xi;$$ by direct calculation, we have
\begin{equation*}
\int_0^1 |k_2(t,s)|\,ds =\begin{cases} -t^2/2-\alpha_2\xi+1/2=:\theta_1(t), &  0 \le t\leq1-\alpha_2,\\  -t^2/2+2\xi t-2\xi +\alpha_2 \xi+1/2=:\theta_2(t),& 1-\alpha_2\leq t\leq 1,
\end{cases}
 \end{equation*}
 and therefore we obtain
$$
1/m_2= \sup_{t \in [0,1]} \int_0^1 |k_2(t,s)|\,ds =\max\{\theta_1(0), \theta_2(1)\}=\theta_1(0)=-\alpha_2 \xi +1/2.
$$ 
 For arbitrary $0 \leq a< b\leq \xi$, the kernel $k_2$ is a positive, non-increasing function of $t$.
Thus we have
$$
1/M_2(a,b)=\min_{t\in[a,b]} \int_{a}^{b} k_2(t,s)\,ds=\int_a^b
k_2(b,s)\,ds.
$$
Note that $\inf_{0\leq a <b}M_2(a,b)=M_2(0,b)$ and we get
$$
1/M_2(0,b)=\int_{0}^{b}k_2(b,s)\,ds=\Bigl(\frac{1-\alpha_1\eta}{1-\alpha_1}-b\Bigr)b
$$
Now we have
$$
\max_{0< b \leq \xi} \{(1-\alpha_2)b-b^2\}=\begin{cases}
\dfrac{(1-\alpha_2)^2}{4},& \text{ if }  \dfrac{1-\alpha_2}{2}<\xi,\\
(1-\alpha_2)\xi-\xi^2,&  \text{ if }  \dfrac{1-\alpha_2}{2}\geq \xi.
\end{cases}
$$

Therefore we may take as optimal interval
$$
[a_2,b_2]=\begin{cases}
[0, \frac{1-\alpha_2}{2}],& \text{ if }  \frac{1-\alpha_2}{2}<\xi,\\
[0, \xi],&  \text{ if }  \frac{1-\alpha_2}{2}\geq \xi.
\end{cases}
$$

\section{Radial solutions of systems of elliptic PDEs}\label{radsolpdes}
We now turn back our attention to the systems of BVPs
\begin{gather}
\begin{aligned}\label{ellbvp-secapp}
\Delta u + h_1(|x|) f_1(u,v)=0,\ |&x|\in [R_1,R_0], \\
\Delta v + h_2(|x|) f_2(u,v)=0,\ |&x|\in [R_1,R_0],\\
\frac{\partial u}{\partial r}\bigr\rvert_{\partial B_{R_0}}=0\ \text{and}\
(u(R_1\cdot)-\alpha_1  &u(R_\eta \cdot))|_{\partial B_{1}}=0,\\
\frac{\partial v}{\partial r}\bigr\rvert_{\partial B_{R_0}}=0 \ \text{and}\ 
\bigl(v(R_1\cdot)-\alpha_2 & \frac{\partial v}{\partial r}(R_\xi \cdot)\bigr)|_{\partial B_{1}}=0,%
\end{aligned}
\end{gather}
where $x\in \mathbb{R}^n $, $\alpha_1<0$, $0<\alpha_2<1$,
$0<R_1<R_0<\infty$, $R_\eta, R_\xi \in (R_1,R_0)$.

Consider  in $\mathbb{R}^n$, $n\ge 2$, the equation
\begin{equation}\label{eqell}
\triangle w+ h(|x|)f(w) = 0, \  \text{for a.e.}\  |x|\in
[R_{1},R_{0}].
\end{equation}
\par\noindent
with the BCs
\begin{equation*}\label{ellBC1}
\frac{\partial w}{\partial r}\bigr\rvert_{\partial B_{R_0}}=0\ \text{and}\
(w(R_1\cdot)-\alpha_1  w(R_\eta \cdot))|_{\partial B_{1}}=0,
\end{equation*}
or
\begin{equation*}\label{ellBC2}
\frac{\partial w}{\partial r}\bigr\rvert_{\partial B_{R_0}}=0 \ \text{and}\ 
\bigl(w(R_1\cdot)-\alpha_2 \frac{\partial w}{\partial r}(R_\xi \cdot)\bigr)|_{\partial B_{1}}=0. %
\end{equation*}

In order to establish the existence of radial solutions $w=w(r)$, $r=|x|$, we proceed as in \cite{lan, lan-lin-na, lanwebb} and we  rewrite
\eqref{eqell} in the form
\begin{equation}\label{eqinterm}
w''(r) + \dfrac{n-1}{r}w'(r) + h(r)f(w(r))= 0 \quad\text{a.e. on }
[R_{1}, R_{0}].
\end{equation}
Set
$w(t)=w(r(t))$ where, for $n\geq 3$,
$$
r(t)=({\gamma}+({\beta}-{\gamma})t)^{-1/(n-2)},\ \text{for}\ t\in
[0,1],
$$
with ${\gamma}=R_{0}^{-(n-2)}$ and ${\beta}=R_{1}^{-(n-2)}$,  and for $n=2$,
$$r(t)=R_0^{1-t}R_1^{t},\ \text{for}\ t\in
[0,1].
$$
Take for $n\geq 3$
$$
{\phi}(t)=(({\beta}-{\gamma})/(n-2))^{2}({\gamma}
+({\beta}-{\gamma})t)^{-2(n-1)/(n-2)},
$$
and for $n=2$  
$$
\phi(t)=\bigl(R_0(1-t)\log \frac{R_0}{R_1}\bigr)^2.
$$ 
Then the equation ~\eqref{eqinterm}
becomes 
\begin{equation*}\label{eqdef}
w''(t) + {\phi}(t) h(r(t)) f(w(t)) = 0, \  \text{a.e. on}\ [0,1],
\end{equation*}
 subject to the BCs
 \begin{equation*}\label{BC1secapp}
w'(0)=0,\; {\alpha}w({\eta})=w(1), \; 0 < {\eta}<1,
\end{equation*}
 or 
\begin{equation*}\label{BC2secapp}
w'(0)=0,\; {\alpha}w'({\xi})=w(1), \; 0 < {\xi}<1.
 \end{equation*}
Thus, to the system \eqref{ellbvp-secapp} we can associate the system of Hammerstein integral equations
\begin{gather}
\begin{aligned}\label{int-secapp}
u(t)=\int_{0}^{1}k_1(t,s)g_1(s)f_1(u(s),v(s))\,ds, \\
v(t)= \int_{0}^{1}k_2(t,s)g_2(s)f_2(u(s),v(s))\,ds,%
\end{aligned}
\end{gather}
where $k_1$ is as in \eqref{ker1}, $k_2$ is as in \eqref{ker2} and $$g_i(t):={\phi}(t) h_i(r(t)).$$ 

The results of the previous Sections can be applied to the system \eqref{int-secapp}, yielding results for the system \eqref{ellbvp-secapp}, we refer to \cite{lan-lin-na, lanwebb} for the results that may be stated.

We illustrate in the following example that all the constants that occur in the Theorem~\ref{mult-sys} can be computed.
\begin{ex}
Consider  in $\mathbb{R}^2$,  the system of BVPs
\begin{gather}
\begin{aligned}\label{ellbvpex}
\Delta u +  f_1(u,v)=0&,\ |x|\in [1,e], \\
\Delta v + f_2(u,v)=0&,\ |x|\in [1,e],\\
\frac{\partial u}{\partial r}\bigr\rvert_{\partial B_{e}}=0\ \text{and}\
(u(\cdot)+  &u(\sqrt{2} \cdot))|_{\partial B_{1}}=0,\\
\frac{\partial v}{\partial r}\bigr\rvert_{\partial B_{e}}=0 \ \text{and}\ 
\bigl(v(\cdot)-\frac{1}{4} & \frac{\partial v}{\partial r}(\sqrt[4]{e^3} \cdot)\bigr)|_{\partial B_{1}}=0.%
\end{aligned}
\end{gather}
To the system~\eqref{ellbvpex} we associate the system of second order ODEs
\begin{gather*}
\begin{aligned}\label{ellbvpodeex}
u''(t)+ e^2(1-t)^2 f_1(u(t),v(t))=0&,\ t\in [0,1], \\
v''(t) + e^2(1-t)^2 f_2(u(t),v(t))=0&,\ t\in [0,1],\\
u'(0)=0,\; u(1/2)+u(1)=0,\\
v'(0)=0,\; v'(1/4)=4v(1).%
\end{aligned}
\end{gather*}
Now we have
\begin{multline*}
 \frac{1}{m_{1}}=\sup_{t\in [0,1]}\int_{0}^{1}\vert k_{1}(t,s)\vert g_{1}(s)\,ds\\
 =\max \Bigl\{ 
 \sup_{t\in [0,1/2]}
 \bigl\{  -{\frac {e^2}{384}}  \left( -128\,{t}^{3}+32\,{t}^{4}+192\,{t}^{2}-65 \right)  \bigr\},\\
 \sup_{t\in [1/2,3/4]}
  \bigl\{
-{\frac {e^2 }{384}} \left( 160\,{t}^{4}+864\,{t}^{2}-608\,{t}^{3}+19-400\,t \right)\bigr\}, \\
 \sup_{t\in [3/4,1]}   \bigl\{ 
 \frac{1}{2}t+\frac{5}{4}\, e^2{t}^{4}+15/2\,{t}^{2} e^2-5\,{t}^{3} e^2+{\frac {467}{384}}\, e^2-{\frac {119}{24}}\,t e^2- \frac{3}{8}
 \bigr\} \Bigr\},
 \end{multline*}{}
and 
\begin{multline*}
 \frac{1}{m_{2}}=\sup_{t\in [0,1]}\int_{0}^{1}\vert k_{2}(t,s)\vert g_{2}(s)\,ds\\
 =\max \Bigl\{ 
 \sup_{t\in [0,1/4]}
 \bigl\{   -{\frac {e^2}{768}} \left( -256\,{t}^{3}+64\,{t}^{4}+384\,{t}^{2}-155 \right)  \bigr\},\\
 \sup_{t\in [1/4,3/4]}
  \bigl\{
 -{\frac {e^2}{768}} \left( -256\,{t}^{3}+64\,{t}^{4}+384\,{t}^{2}-155 \right)
 \bigr\}, \\
 \sup_{t\in [3/4,1]}   \bigl\{ 
-{\frac {e^2}{768}} \left( 67-296\,t-256\,{t}^{3}+64\,{t}^{4}+384\,{t}^{2} \right)
 \bigr\} \Bigr\}.
 \end{multline*}{}
We fix $[a_1,b_1]=[a_2,b_2]=[0,1/4]$, obtaining
$$
 \frac{1}{M_{1}} = \inf_{t\in
[0,1/4]}\int_{0}^{1/4} k_1(t,s) g_1(s)\,ds=\inf_{t\in [0,1/4]}\Bigl\{ -{\frac{e^{2}}{3072}} ( -377+256\,{t}^{4}-1024\,{t}^{3}+1536\,{t}^{2} ) \Bigr\},
$$
and
$$
 \frac{1}{M_{2}} = \inf_{t\in
[0,1/4]}\int_{0}^{1/4} k_2(t,s) g_2(s)\,ds=\inf_{t\in [0,1/4]}\Bigl\{ -{\frac {e^2}{3072}}\left( -377+256\,{t}^{4}-1024\,{t}^{3}+1536\,{t}^{2} \right)  \Bigr\}.
$$
By direct computation, we get
$$
c_1=\frac{1}{4};\ m_1=\frac{384}{65e^2} ;\ M_1=\frac{384}{37e^2};\ c_2=\frac{1}{2};\ m_2=\frac{768}{155e^2};\ M_2=\frac{384}{37e^2}.
$$
Let us now consider
$$
f_1(u,v)=\frac{1}{4}(|u|^3+|v|^3+1), \quad f_2(u,v)=\frac{1}{3}(|u|^{\frac{1}{2}}+v^2).
$$
Then, with the choice of $\rho_1=1/6$, $\rho_2=1/3$, $r_1=r_2=1$,  $s_1=3$ and $s_2=5$, we obtain
\begin{align*}
\inf \Bigl\{ f_1(u,v):\; (u,v)\in [0,4\rho_1]\times[-2\rho_2,2\rho_2]
\Bigr\}= f_1(0,0)&>M_1\rho_1, \\
\sup \Bigl\{ f_1(u,v):\; (u,v)\in [-r_1, r_1]\times[-r_2, r_2]\Bigr\}=f_1(1,1)
&< m_1 r_1, \\
\sup \Bigl\{ f_2(u,v):\; (u,v)\in [-r_1, r_1]\times[-r_2, r_2]\Bigr\}=f_2(1,1)
&<m_2 r_2, \\
\inf \Bigl\{ f_1(u,v):\; (u,v)\in [s_1,4s_1]\times[-2s_2,2s_2]\Bigr\}=f_1(s_1,0)
&>M_1 s_1, \\
\inf \Bigl\{ f_2(u,v):\; (u,v)\in [-4s_1,4s_1]\times[s_2, 2s_2]\Bigr\}=f_2(0,s_2)
&>M_2 s_2.
\end{align*}
Thus the conditions $(\mathrm{I}^{0}_{\rho_{1},\rho_2})^{\star}$, $(\mathrm{I}%
^{1}_{r_1,r_{2}})$ and $(\mathrm{I}^{0}_{s_1,s_{2}})$ are satisfied; therefore
the system~\eqref{ellbvpex} has at least two nontrivial solutions.

\end{ex}


\begin{thebibliography}{00}
\bibitem{ag-or-pw}
R. P.  Agarwal, D. O'Regan and P. J. Y. Wong, \textit{Constant-sign solutions of systems of integral equations}, Springer, Cham, 2013.

\bibitem{Amann-rev} H. Amann, 
Fixed point equations and nonlinear eigenvalue
problems in ordered Banach spaces, \textit{SIAM. Rev.}, \textbf{18} (1976),
620--709.

\bibitem{ammau} P. Amster and M. Maurette, 
An elliptic singular system with nonlocal boundary conditions, 
\textit{Nonlinear Anal.},  \textbf{75} (2012), 5815--5823.

\bibitem{beals} R. Beals, Nonlocal elliptic boundary value problems, \textit{Bull. Amer. Math. Soc.},  \textbf{70} (1964), 693--696.

\bibitem{bisamarADAN69} A. V. Bitsadze and  A. A. Samarski\u\i,
Some elementary generalizations of linear elliptic boundary value
problems (Russian), {\it Anal. Dokl. Akad. Nauk SSSR}, {\bf 185}
(1969), 739--740.

\bibitem{browder} F. Browder, Non-local elliptic boundary value problems, \textit{Amer. J. Math.},  \textbf{86} (1964), 735--750.

\bibitem{Cab1} A. Cabada, G. Infante and F. A. F. Tojo, Nonzero solutions of perturbed Hammerstein integral equations with deviated arguments and applications, \textit{arXiv:1306.6560 [math.CA]}, (2013).

\bibitem{chzh1} X. Cheng and Z. Zhang, 
 Existence of positive solutions to systems of nonlinear integral or differential equations, 
 \textit{Topol. Methods Nonlinear Anal.},  \textbf{34} (2009), 267--277.
 
\bibitem{chzh2}
 X. Cheng and C. Zhong, 
 Existence of positive solutions for a second-order ordinary differential system,
 \textit{J. Math. Anal Appl.}, \textbf{312} (2005), 14--23. 

\bibitem{Conti} R. Conti, 
Recent trends in the theory of boundary value problems for ordinary differential equations.
\textit{Boll. Un. Mat. Ital.}, \textbf{22} (1967), 135--178. 
 
\bibitem{dunn-wang1}
D. R. Dunninger and H. Wang, Existence and multiplicity of positive solutions for elliptic systems, \textit{Nonlinear Anal.},  \textbf{29} (1997), 1051--1060.

\bibitem{dunn-wang2}
D. R. Dunninger and H. Wang, Multiplicity of positive radial solutions for an elliptic system on an annulus, \textit{Nonlinear Anal.},  \textbf{42} (2000),  803--811.
 
\bibitem{dolo0} J. M. do {\'O}, S. Lorca and P. Ubilla, 
Local superlinearity for elliptic systems involving parameters,
\textit{J. Differential Equations}, \textbf{211} (2005), 1--19.

  \bibitem{dolo} J. M. do {\'O}, S. Lorca and P. Ubilla, 
Three positive solutions for a class of elliptic systems in annular domains, 
\textit{Proc. Edinb. Math. Soc. (2)}, \textbf{48} (2005),  365--373.

\bibitem{dolo3} J. M. do {\'O}, J. S{\'a}nchez, S. Lorca and P. Ubilla, 
Positive solutions for a class of multiparameter ordinary elliptic systems, 
\textit{J. Math. Anal. Appl.}, \textbf{332} (2007),  1249--1266.

\bibitem{df-gi-do} D. Franco, G. Infante and D. O'Regan,
Nontrivial solutions in abstract cones for Hammerstein integral systems,
\textit{Dyn. Contin. Discrete Impuls. Syst. Ser. A Math. Anal.}, {\bf 14} (2007), 837--850.

\bibitem{Goodrich1} C. S. Goodrich, 
Nonlocal systems of BVPs with asymptotically superlinear boundary conditions,
\textit{Comment. Math. Univ. Carolin.}, \textbf{53} (2012), 79--97.

\bibitem{Goodrich2} C. S. Goodrich, 
Nonlocal systems of BVPs with asymptotically sublinear boundary conditions,
\textit{Appl. Anal. Discrete Math.}, \textbf{6} (2012), 174--193.

\bibitem{guolak} D. Guo and V. Lakshmikantham, 
\textit{Nonlinear Problems in Abstract Cones}, Academic Press, Boston, 1988.

\bibitem{jh-rl1} J. Henderson and R. Luca,
Existence and multiplicity for positive solutions of a system of higher-order multi-point boundary value problems, \textit{NoDEA Nonlinear Differential Equations Appl.}, \textbf{20} (2013), 1035--1054.

\bibitem{jh-rl2} J. Henderson and R. Luca, Positive solutions for systems of second-order integral boundary value problems, \textit{Electron. J. Qual. Theory Differ. Equ.}, \textbf{70} (2013), 21 pp.

\bibitem{giems} G. Infante, Eigenvalues of some non-local boundary-value problems,
\textit{Proc. Edinb. Math. Soc.}, \textbf{46} (2003), 75--86.

\bibitem{gipp-ns} G. Infante and P. Pietramala,
Eigenvalues and non-negative solutions of a system with nonlocal BCs,
\textit{Nonlinear Stud.}, \textbf{16} (2009), 187--196.

\bibitem{gipp-cant} G. Infante and P. Pietramala,
A cantilever equation with nonlinear boundary conditions
\textit{Electron. J. Qual. Theory Differ. Equ.}, \textbf{Spec. Ed. I}, No. 15 (2009), 1--14.

\bibitem{gi-pp} G. Infante and P. Pietramala, Perturbed Hammerstein integral inclusions with solutions that change sign, \textit{Comment. Math. Univ. Carolin.}, \textbf{50} (2009), 591--605.

\bibitem{gipp-nonlin} G. Infante and P. Pietramala,
 Existence and multiplicity of non-negative solutions for systems of perturbed Hammerstein integral equations, 
 \textit{Nonlinear Anal.}, \textbf{71} (2009), 1301--1310.

\bibitem{gi-pp-ft} 
 G. Infante, P. Pietramala and F. A. F. Tojo, 
 Nontrivial solutions of local and nonlocal Neumann boundary value problems, \textit{arXiv:1404.1390 [math.CA]}, (2014).

\bibitem{gijwjmaa} G. Infante and J. R. L. Webb, Nonzero solutions
of Hammerstein integral equations with discontinuous kernels,
\textit{J. Math. Anal. Appl.}, \textbf{272} (2002), 30--42.

\bibitem{gijwjiea} G. Infante and J. R. L. Webb, Three point
boundary value problems with solutions that change sign,
\textit{J. Integral Equations Appl.}, \textbf{15} (2003), 37--57.

\bibitem{karejde} G. L. Karakostas, 
Existence of solutions for an {$n$}-dimensional operator equation and applications to {BVP}s, 
\textit{Electron. J. Differential Equations}, \textbf{71} (2014), 17 pp.

\bibitem{krzab} M. A. Krasnosel'ski\u\i{} and P. P. Zabre\u{\i}ko,
{\it Geometrical methods of nonlinear analysis}, Springer-Verlag,
Berlin, (1984).

\bibitem{kljdeds} K. Q. Lan, 
Multiple positive solutions of Hammerstein integral equations with singularities, 
\textit{Diff. Eqns and Dynam. Syst.}, \textbf{8} (2000), 175--195.

\bibitem{lan} K. Q. Lan, 
{Multiple positive solutions of semilinear differential equations with singularities,}
\textit{J. London Math. Soc.}, \textbf{63} (2001), 690--704.

\bibitem{lan-lin-lms} K. Q. Lan and W. Lin,
Multiple positive solutions of systems of Hammerstein integral equations with applications to fractional differential equations,
\textit{J. Lond. Math. Soc.},  \textbf{83} (2011),  449--469. 

\bibitem{lan-lin-na} K. Q. Lan and W. Lin,
Positive solutions of systems of singular Hammerstein integral equations with applications to semilinear elliptic equations in annuli, 
\textit{Nonlinear Anal.}, \textbf{74} (2011), 7184--7197.

\bibitem{lanwebb} K. Q. Lan and J.R.L. Webb, 
Positive solutions of semilinear differential equations with singularities, 
\textit{J. Differential Equations}, \textbf{148} (1998), 407--421.

\bibitem{lee} Y.-K. Lee, 
Multiplicity of positive radial solutions for multiparameter semilinear elliptic systems on an annulus,
\textit{J. Differential Equations}, \textbf{174} (2001), 420--441.  
 
\bibitem{rma-el} R. Ma, 
Existence of positive radial solutions for elliptic systems,
\textit{J. Math. Anal. Appl.}, \textbf{201} (1996), 375--386.

\bibitem {rma} 
R. Ma, A survey on nonlocal boundary value problems, \textit{Appl. Math. E-Notes}, \textbf{7} (2001), 257--279.

\bibitem{sotiris} 
S. K. Ntouyas, Nonlocal initial and boundary value problems: a survey, \textit{Handbook of differential equations: ordinary differential
equations. Vol. II}, 461--557, Elsevier B. V., Amsterdam, 2005.

\bibitem{Picone} M. Picone, Su un problema al contorno nelle equazioni differenziali lineari ordinarie del secondo ordine, 
\textit{Ann. Scuola Norm. Sup. Pisa Cl. Sci.}, \textbf{10} (1908), 1--95. 

\bibitem{paola}  P. Pietramala, 
A note on a beam equation with nonlinear boundary conditions, 
\textit{Bound. Value Probl.},  (2011), Art. ID 376782, 14 pp.

\bibitem{precup1} R. Precup, 
Componentwise compression-expansion conditions for systems of nonlinear operator equations and applications,
\textit{Mathematical models in engineering, biology and medicine}, 284--293, AIP Conf. Proc., 1124, Amer. Inst. Phys., Melville, NY, 2009.

\bibitem{precup2} R. Precup, 
Existence, localization and multiplicity results for positive radial solutions of semilinear elliptic systems,
\textit{J. Math. Anal. Appl.}, \textbf{352} (2009), 48--56.

\bibitem{schechter} M. Schechter, Nonlocal elliptic boundary value problems,
\textit{Ann. Scuola Norm. Sup. Pisa}, \textbf{20} (1966), 421--441.

\bibitem{skuba1} A. L. Skubachevski{\u\i}, Nonclassical boundary value problems. I,
\textit{J. Math. Sci. (N. Y.)}, \textbf{155} (2008), 199--334.

\bibitem{skuba2} A. L. Skubachevski{\u\i}, Nonclassical boundary value problems. II,
\textit{J. Math. Sci. (N. Y.)}, \textbf{166} (2010), 377--561.
      
\bibitem{Stik} A. \v{S}tikonas, A survey on stationary problems, Green's functions and spectrum of Sturm-Liouville problem
with nonlocal boundary conditions, 
\textit{Nonlinear Anal. Model. Control}, \textbf{19} (2014), 301--334. 

\bibitem{wangh} H. Wang, 
On the existence of positive solutions for semilinear elliptic equations in the annulus, 
\textit{J. Differential Equations}, \textbf{109} (1994), 1--7.

\bibitem{wangy} Y. Wang, 
Solutions to nonlinear elliptic equations with a nonlocal boundary condition, 
\textit{Electron. J. Differential Equations}, \textbf{05}  (2002), 16 pp.

\bibitem{webb} J. R. L. Webb,
 Positive solutions of some three point boundary value problems via fixed point index theory, 
 \textit{Nonlinear Anal.}, \textbf{47} (2001), 4319--4332. 

\bibitem{jwpomona} J. R. L. Webb, Multiple positive solutions of some
nonlinear heat flow problems, \textit{Discrete Contin. Dyn. Syst.}, 
\textbf{suppl.} (2005), 895--903.

\bibitem{jwwcna04} J. R. L. Webb, Optimal constants in a nonlocal boundary
value problem, \textit{Nonlinear Anal.}, \textbf{63} (2005), 672--685.
 
\bibitem{jw-lms} J. R. L. Webb, 
Solutions of nonlinear equations in cones and positive linear operators, 
\textit{J. Lond. Math. Soc.}, \textbf{82} (2010), 420--436.

\bibitem{jw-tmna} J. R. L. Webb, A class of positive linear operators and applications to nonlinear boundary value problems, \textit{Topol. Methods Nonlinear Anal.}, \textbf{39} (2012), 221--242.

\bibitem{jw-gi-jlms} J.R.L. Webb and G. Infante, 
Positive solutions of nonlocal boundary value problems: a unified approach, 
\textit{J. London Math. Soc.}, \textbf{74} (2006), 673--693.

\bibitem{jwgi-lms-II} 
J. R. L. Webb and G. Infante,
Nonlocal boundary value problems of arbitrary order, \textit{J. London Math. Soc.}, \textbf{79} (2009), 238--258.

\bibitem{jwkleig} J. R. L. Webb and K. Q. Lan, 
Eigenvalue criteria for existence of multiple positive solutions of nonlinear boundary value problems of local and nonlocal type,
\textit{Topol. Methods Nonlinear Anal.}, \textbf{27} (2006), 91--115.

\bibitem{Whyburn} 
W. M. Whyburn, Differential equations with general boundary conditions, \textit{Bull. Amer. Math. Soc.}, \textbf{48} (1942), 692--704. 

\bibitem{ya1} Z. Yang, 
Positive solutions to a system of second-order nonlocal boundary value problems, 
\textit{Nonlinear Anal.}, \textbf{62} (2005), 1251--1265.

\bibitem{ya2} Z. Yang, 
Positive solutions for a system of nonlinear Hammerstein integral equations and applications, 
\textit{Appl. Math. and Comput.}, \textbf{218} (2012), 11138--11150.

\bibitem{yang-zhang}
Z. Yang and Z. Zhang, Positive solutions for a system of nonlinear singular Hammerstein integral equations via nonnegative matrices and applications, \textit{Positivity}, \textbf{16} (2012),  783--800. 

\bibitem{yeKe} H. Ye and Y. Ke, 
A $p$-Laplace equation with nonlocal boundary condition in a perforated-like domain, 
\textit{Bull. Belg. Math. Soc. Simon Stevin}, \textbf{20} (2013), 895--908.

\end{thebibliography}
\end{document}